\newcommand{\id}[1]{{\text{id}}_{#1}}
\newcommand{\Fil}[3]{{\rm Fil}^{#1}_{#2}(#3)}
\newcommand{\Filt}[2]{{\rm Fil}^{#1}_{#2}}
\newcommand{\Grr}[3]{{\rm gr}^{#1}_{#2}(#3)}
\newcommand{\Z}{{\mathbbm Z}}
\newcommand{\Q}{{\mathbbm Q}}
\newcommand{\spec}[1]{{\text{Spec}(#1)}}
\newcommand{\sproj}[2]{{\mathbbm P}^{#1}_{#2}}
\newcommand{\cali}[1]{{\mathscr #1}}
\newcommand{\supp}[1]{{\rm Supp}(#1)}
\newcommand{\codim}[2]{{\rm codim}(#1,#2)}
\newcommand{\dime}[1]{{\rm dim}(#1)}
\newcommand{\class}[2]{{\rm cl}_{#1}({#2})}
\renewcommand{\id}[1]{{\rm id}_{#1}}
\newcommand{\tor}[4]{{{\rm Tor}}^{#1}_{#2}(#3,#4)}
\newtheoremstyle{astatement}
{13pt}
{13pt}
{\it}
{}
{\bf}
{.$-$}
{.5em}
{}
\theoremstyle{astatement}
\newtheoremstyle{adefinition}
{13pt}
{13pt}
{}
{}
{\scshape}
{.}
{.5em}
{}
\theoremstyle{adefinition}
\newtheoremstyle{statement}
{13pt}
{13pt}
{\it}
{}
{\bf}
{.$-$}
{.5em}
{}
\theoremstyle{statement}
\newtheorem{theorem}{Theorem}[section]
\newtheorem*{theorem-introduction}{Theorem}
\newtheorem{lemma}[theorem]{Lemma}
\newtheorem{proposition}[theorem]{Proposition}
\newtheorem{cor}[theorem]{Corollary}
\newtheoremstyle{definition}
{13pt}
{13pt}
{}
{}
{\scshape}
{.}
{.5em}
{}
\theoremstyle{definition}
\newtheorem{example}[theorem]{Example}
\newtheorem{remark}[theorem]{Remark}
\newtheoremstyle{definitionprime}
{13pt}
{13pt}
{}
{}
{\scshape}
{$'$.}
{.5em}
{}
\theoremstyle{definitionprime}
\newtheoremstyle{remarks}
{13pt}
{13pt}
{}
{}
{\scshape}
{.}
{.5em}
{}
\theoremstyle{remarks}
\newtheoremstyle{remarksb}
{13pt}
{13pt}
{}
{}
{\scshape}
{.}
{\newline}
{}
\theoremstyle{remarksb}
\newtheoremstyle{underlined}
{13pt}
{13pt}
{\sl}
{}
{\scshape}
{}
{.5em}
{}
\theoremstyle{underlined}
\def\section@cntformat{\S\thesection.\ }
\def\subsection@seccntformat{\S\thesubsection \ }
\numberwithin{equation}{theorem}
\begin{document}
\title{On coniveau filtration of Grothendieck group of a scheme}


\author{Shahram Biglari}
\address{School of Mathematics, Institute for studies in fundamental sciences (IPM), P.O.~Box: 19395-5746, Tehran, Iran}
\curraddr{} \email{biglari@ipm.ir}
\thanks{}


\subjclass[2010]{Primary 14C17 - Secondary 19A}

\keywords{Grothendieck ring, coniveau filteration, multiplicativity.}

\date{2012}
\maketitle
\section{Introduction}\label{introduction}
In this work we deal with the coniveau filtration on the Grothendieck group $K_0$ of coherent modules on a scheme and its behavior under presence of other algebraic structures on $K_0$.

For a (noetherian) scheme $S$ of finite dimension $d$, the Grothendieck group of coherent modules on $S$, denoted by $K_0'(S)$, has a decreasing filtration
\[
K_0'(S)=:\Filt{0}{\rm top}\supseteq \Filt{1}{\rm top}\supseteq\dotsc \supseteq\Filt{d+1}{\rm top}=0
\]
where the subgroup $\Filt{q}{\rm top}$ is generated by classes of modules whose support is of codimension at least $q$ (see \S 2). Now assume that $S$ is in addition regular (and separated). The group $K_0'(S)$ is then isomorphic to Grothendieck group $K_0(S)$ of locally free coherent modules (cf.~\ref{pro:theta_X-iso}). In particular $K_0(S)$ is a ring whose underlying additive group is equipped with a decreasing filtration (induced from the isomorphism $K_0\simeq K_0'$). The important question is whether this filtration is multiplicative; i.e. if
\[
\Filt{p}{\rm top}\cdot \Filt{q}{\rm top}\subseteq\Filt{p+q}{\rm top}.
\]
A result of Grothendieck appearing in [SGA 6, Exp. 0] (and using Chow's moving lemma) gives a positive answer to the question in the case of smooth quasi-projective schemes over a field.

Here we drop some assumptions on the base scheme and prove a first case of a general multiplicativity result. More precisely, we prove:
\begin{theorem-introduction}[\ref{thm:multiplicativity-regular}]
If $Z_1$ and $Z_2$ are regularly embedded closed subschemes of respective codimension $q_1$ and $q_2$ in a regular noetherian separated scheme $S$, then
\[
\class{}{\cali{O}_{Z_1}}\cdot \class{}{\cali{O}_{Z_2}}\in \Fil{q_1+q_2}{\rm top}{S}.
\]
\end{theorem-introduction}
The ring $K_0(S)$ has a $\lambda-$ring structure (see the discussion preceding~\ref{lem:K0-lambda-functor}). There naturally arises the question of whether this structure is in an appropriate sense compatible with the coniveau filtration. This is therefore within the context of the vast discourse of Riemann-Roch (\'a la Grothendieck). 

Classically, an up to torsion isomorphism between the $\gamma-$filtration and the coniveau filtration is established first (cf.~\ref{thm:gamma-top-iso-Q} where a proof of this is presented). Then it will be clear that these filtrations on $K_0(S)\otimes_\Z\Q$ commute with such natural operations as $\lambda^n$ and $\gamma^n$.  

We prove that the Adams operations $\psi_n$ (cf.~\eqref{def:adams}) respect the coniveau filtration on $K_0(S)$ in a strong sense. More precisely:
\begin{theorem-introduction}[\ref{thm:adams-top-fil}]
If $S$ is a regular noetherian separated scheme, then for any $n\geq 1$, $q\geq 1$ and $x\in \Fil{q}{\rm top}{S}$ we have
\[
\psi_n(x)-n^{q}x\in \Filt{q+1}{\rm top}.
\] 
\end{theorem-introduction}
\section{Grothendieck groups and their filtrations}
We first recall a few definitions and facts from [SGA 6, Exp. IV, \S 2] on Grothendieck groups. For a (noetherian) scheme $X$ let
\[
K_0'(X):=K_0\bigl(D^b_{\rm coh}(X)\bigr)
\]
be the Grothendieck group of the derived category of (cohomologically) bounded complexes of ${\mathscr O}_X-$modules with coherent cohomology modules. This is an abelian group and (for noetherian $X$) can also be defined as the (\emph{na\"{i}ve}) Grothendieck group of the abelian category of coherent modules.

For an arbitrary scheme $S$, we also consider the Grothendieck group $K_0(S)$ of the exact category of locally free coherent $\cali{O}_S-$modules. Note that the latter is in fact the \emph{na\"{i}ve} $K_0$ of the scheme. The better definition is considered to be the Grothendieck group
\[
K^\circ(S):=K_0\bigl(D(S)_{\rm parf}\bigr) 
\]
of the triangulated category of perfect complexes. For a scheme $S$ there are natural homomorphisms arising from categorical embeddings;
\[
K_0(S)\xrightarrow{i} K^{\circ} (X)\xrightarrow{\theta} K_0'(S).
\]
If $S$ is a noetherian scheme having an ample family of invertible modules in the sense of [SGA 6, Exp. II, 2.2.4] (resp. being separated regular), then $i$ (resp. $\uptheta$) is an isomorphism.
\begin{remark} The above constructions are functorial. More precisely, the following assertions (all explained in [SGA 6, Exp. IV, 11-12]) hold:
\begin{enumerate}[{\rm \qquad 1.}]
 \item $S\mapsto K_0(S)$ (resp. $S\mapsto K^\circ (S)$) defines a contravariant functor from schemes to commutative rings.
 \item $S\mapsto K^\circ (S)$ defines a covariant functor from noetherian schemes with proper morphisms of finite tor-dimension to abelian groups.
 \item $X\mapsto K_0'(S)$ defines a covariant functor from noetherian schemes with proper morphisms to abelian groups and a contravariant functors from (noetherian) schemes with morphisms of finite tor-dimension to abelian groups.
\end{enumerate}
\end{remark}
The tensor product (resp. derived tensor product) of modules (resp. complexes) gives $K_0(S)$ (resp. $K^\circ (S)$) the structures of a commutative ring so that $i$ is a homomorphism of rings. There is similarly obtained a pairing
\begin{equation}\label{eq:K0G0-pairing}
K^\circ (X)\otimes_\Z K_0'(X)\to K_0'(X),\quad s\otimes x\mapsto s\cdot x.
\end{equation}
\begin{proposition}[Projection formula]\label{thm:projection-formula}
Let $f\colon X\to S$ be a proper morphism of noetherian schemes. The formula
\[
f_\ast (f^\ast (s)\cdot x)=s\cdot f_\ast (x)
\]
holds for all $s\in K^\circ(S)$ and $x\in K_0'(X)$. If $f$ is in addition perfect, the formula holds in each of the following cases:
\begin{enumerate}[{\rm \qquad 1.}]
\item for all $s\in K^\circ (S)$ and $x\in K^\circ(X)$,
\item for all $s\in K_0'(S)$ and $x\in K^\circ(X)$.
\end{enumerate}
\end{proposition}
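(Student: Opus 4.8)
The plan is to reduce each assertion to a single isomorphism in the derived category --- the \emph{derived projection formula} --- and then apply $K_0$. Recall that, for a proper morphism $f\colon X\to S$ of noetherian schemes, $f_{\ast}\colon K_0'(X)\to K_0'(S)$ is induced by the triangulated functor $Rf_{\ast}\colon D^{b}_{\rm coh}(X)\to D^{b}_{\rm coh}(S)$ (properness keeps the higher direct images coherent), that $f^{\ast}\colon K^{\circ}(S)\to K^{\circ}(X)$ is induced by $Lf^{\ast}$, which sends perfect complexes to perfect complexes, and that the pairing \eqref{eq:K0G0-pairing} is induced by $-\otimes^{L}_{\cali{O}_{X}}-$, which carries $D(X)_{\rm parf}\times D^{b}_{\rm coh}(X)$ into $D^{b}_{\rm coh}(X)$. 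Since every element of $K^{\circ}(S)$ equals $\class{}{P}$ for a single perfect complex $P$ (rewrite $\class{}{A}-\class{}{B}$ as $\class{}{A\oplus B[1]}$), and every element of $K_0'(X)$ equals $\class{}{F}$ for a single $F\in D^{b}_{\rm coh}(X)$, it suffices to produce, naturally in $P$ and $F$, an isomorphism
\[
Rf_{\ast}\bigl(Lf^{\ast}P\otimes^{L}_{\cali{O}_{X}}F\bigr)\;\simeq\;P\otimes^{L}_{\cali{O}_{S}}Rf_{\ast}F
\]
in $D^{b}_{\rm coh}(S)$; the two sides are naturally isomorphic triangulated functors of $F$, hence induce the same map on $K_0$, and this identity is precisely $f_{\ast}(f^{\ast}(s)\cdot x)=s\cdot f_{\ast}(x)$.

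For the isomorphism itself I would first write down the natural \emph{projection morphism} $P\otimes^{L}Rf_{\ast}F\to Rf_{\ast}(Lf^{\ast}P\otimes^{L}F)$, assembled from the adjunction unit $P\to Rf_{\ast}Lf^{\ast}P$ and the lax monoidal transformation $Rf_{\ast}(-)\otimes^{L}Rf_{\ast}(-)\to Rf_{\ast}(-\otimes^{L}-)$; it is defined for arbitrary $P\in D(S)$. That it is an isomorphism is a local question on $S$, since both sides commute with restriction to an open subscheme (for $Rf_{\ast}$ this is flat base change along an open immersion, valid as $f$ is proper); so one may take $S$ affine. Over an affine base every perfect complex lies in the thick subcategory generated by $\cali{O}_{S}$ --- it is a retract of a complex built from $\cali{O}_{S}$ by finitely many shifts and mapping cones --- and, since $Rf_{\ast}$, $Lf^{\ast}$ and $-\otimes^{L}F$ are triangulated while the projection morphism is compatible with distinguished triangles, shifts and retracts, the five lemma spreads the isomorphism from the trivial base case $P=\cali{O}_{S}$ (there it is the identity of $Rf_{\ast}F$) to all perfect $P$. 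Alternatively one simply cites the projection formula of [SGA 6, Exp. III].

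In the two cases where $f$ is in addition perfect, the extra ingredient is the finiteness theorem [SGA 6, Exp. III]: $Rf_{\ast}$ then maps $D(X)_{\rm parf}$ into $D(S)_{\rm parf}$, so $f_{\ast}\colon K^{\circ}(X)\to K^{\circ}(S)$ is defined, and $f$ has finite tor-dimension, so $f^{\ast}$ also acts on $K_0'$. Case~1 ($s\in K^{\circ}(S)$, $x\in K^{\circ}(X)$) is the displayed isomorphism read off inside $D(S)_{\rm parf}$, now with $F$ perfect. For Case~2, with $s=\class{}{G}$, $G\in D^{b}_{\rm coh}(S)$, and $x=\class{}{Q}$, $Q$ perfect on $X$, the two members are $\class{}{Rf_{\ast}(Lf^{\ast}G\otimes^{L}Q)}$ and $\class{}{G\otimes^{L}Rf_{\ast}Q}$, so one needs the projection formula with the downstairs factor $G$ only coherent. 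The same projection morphism and the same d\'evissage again reduce to the trivial case $G=\cali{O}_{S}$, but a general coherent complex is no longer obtained from $\cali{O}_{S}$ by finitely many cones and shifts; one finishes either by noetherian induction on $\supp{G}$ (Quillen d\'evissage to $G=i_{\ast}\cali{O}_{Z}$ with $Z\subseteq S$ integral closed) or via the form of the projection formula valid on all of $D_{\rm qc}(S)$, using that $Rf_{\ast}$ commutes with direct sums and that $D_{\rm qc}(S)$ is generated under direct sums by perfect complexes. The single recurring point of care --- and the step I expect to be the main obstacle --- is to keep every derived tensor product and every pushforward within $D^{b}_{\rm coh}$; this is exactly what the perfectness (resp. finite tor-dimension) hypotheses provide, and Case~2, where only a coherent complex lies downstairs, is where this requires the most work.
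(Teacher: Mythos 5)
Your proposal is correct and follows essentially the same route as the paper: both reduce all three assertions to the derived projection‐formula isomorphism $Rf_{\ast}(Lf^{\ast}P\otimes^{L}F)\simeq P\otimes^{L}Rf_{\ast}F$, which the paper simply cites from [SGA 6, Exp.\ III, 3.7] while you additionally sketch its standard proof (localization to affine $S$, d\'evissage through the thick subcategory generated by $\cali{O}_{S}$, and the $D_{\rm qc}$ or noetherian-induction variant needed when the downstairs factor is merely coherent). The extra detail is sound; no gaps.
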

\begin{proof}
These are explained in [SGA 6, Exp. IV, 2.11-12] and all come down to an isomorphism
\[
Rf_\ast (\cali{E}\otimes_X^{\mathbbm L} \cali{F})\simeq Rf_\ast (\cali{E})\otimes_S^{\mathbbm L} \cali{F}
\]
for perfect complexes $\cali{F}$ and coherent complexes $\cali{E}$ (treated in full details in [SGA 6, Exp. III, 3.7]).
\end{proof}
\begin{remark}
We also note that with assumptions as in the main statement above, the projection formula holds when $s\in K_0(S)$. This is because the $K_0(S)-$module structure of $K_0'(S)$ is defined through the homomorphism $i$ and the $K^\circ (S)-$module structure. 
\end{remark}

Now we consider the gamma (or Grothendieck) filtration of the ring $K_0$ of a scheme. A reference for the basic results is [SGA 6, Exp. V, VI].

For a scheme $S$, the Grothendieck group of the category of locally free $S-$modules of bounded finite rank is denoted by $K_0(S)$. As mentioned earlier, using the tensor product of modules, this becomes a ring. Defining for each $n\geq 0$ and each $x=\class{}{\cali{E}}$ the element
\[
\lambda^n(x):={\rm cl}\bigl({\rm Alt}^{n}_{\cali{O}_S}(\cali{E}) \bigr),
\]
gives $K_0(S)$ a (pre-)$\lambda-$ring structure: $\lambda^0(x)=1$, $\lambda^1(x)=x$ and the map
\[
\lambda=\lambda_t\colon K_0(S)\to 1+tK_0(S)[\![t]\!],\quad x\mapsto \sum \lambda^n(x)t^n
\]
is a homomorphism of groups (cf.~[SGA 6, Exp. V, \S 2]). Assume that $S$ is connected. In this case there is an augmentation $\epsilon\colon K_0(S)\to \Z$ given by $\class{}{\cali{E}}\mapsto {\rm rk}(\cali{E})$. Note that $\epsilon$ is compatible with each $\lambda^i$; for $x=\class{}{\cali{E}}$ we have from the corresponding results for affine schemes:
\[
\epsilon(\lambda^n(x))=\lambda^n(\epsilon(x))=(n!)^{-1}\epsilon(x)\cdot (\epsilon(x)-1)\dotsc (\epsilon(x)-n+1).
\]
\begin{lemma}\label{lem:K0-lambda-functor}
The assignment $S\mapsto K_0(S)$ defines a contravariant functor from {\emph{(connected)}} schemes to the category of \emph{($\Z$-augmented)} $\lambda-$ rings.
\end{lemma}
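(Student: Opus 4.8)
The plan is to reduce the statement to the naturality of the $\lambda$-operations and of the rank, since the discussion preceding the lemma already records that $S\mapsto K_0(S)$ is a contravariant functor to commutative rings, that each $K_0(S)$ carries the pre-$\lambda$-ring structure $x\mapsto\sum_n\lambda^n(x)t^n$, and that for connected $S$ the augmentation $\epsilon$ satisfies $\epsilon(\lambda^n(x))=\lambda^n(\epsilon(x))$, i.e. is a homomorphism of $\lambda$-rings $K_0(S)\to\Z$. What remains is thus: for a morphism $f\colon X\to S$ of connected schemes, to show that the ring homomorphism $f^\ast\colon K_0(S)\to K_0(X)$ commutes with every $\lambda^n$ and with the augmentations, i.e. $f^\ast\circ\lambda^n=\lambda^n\circ f^\ast$ and $\epsilon_X\circ f^\ast=\epsilon_S$. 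Compatibility with composition of morphisms and with identities is then automatic, being inherited from the ring-valued functor.

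For the commutation with $\lambda^n$, I would invoke the canonical isomorphism $f^\ast\Alt{n}{\cali{E}}\simeq\Alt{n}{f^\ast\cali{E}}$ of $\cali{O}_X$-modules, valid for any (locally free coherent) $\cali{O}_S$-module $\cali{E}$ and any $n\geq 0$: exterior powers commute with arbitrary base change, since $\Alt{n}{-}$ is a right-exact construction built from tensor products and $f^\ast$ is monoidal and right exact, so no flatness is needed. Passing to classes gives $f^\ast(\lambda^n\class{}{\cali{E}})=\lambda^n(f^\ast\class{}{\cali{E}})$ on the generators of $K_0(S)$. To promote this to all of $K_0(S)$, observe that applying $f^\ast$ coefficientwise is a homomorphism of multiplicative groups $1+tK_0(S)[\![t]\!]\to 1+tK_0(X)[\![t]\!]$, that $\lambda_t$ is a group homomorphism on either side, and that the resulting square of group homomorphisms commutes on the generating classes $\class{}{\cali{E}}$; hence it commutes identically, which is exactly $f^\ast\lambda^n=\lambda^n f^\ast$ for all $n$.

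For the augmentations, connectedness of $S$ makes $\epsilon_S\class{}{\cali{E}}={\rm rk}(\cali{E})$ a well-defined integer $r$, and the pullback $f^\ast\cali{E}$ of a locally free module of constant rank $r$ is again locally free of constant rank $r$ (its fibre at $x\in X$ being the fibre of $\cali{E}$ at $f(x)$ extended along $\kappa(f(x))\to\kappa(x)$); hence $\epsilon_X(f^\ast\class{}{\cali{E}})=r=\epsilon_S\class{}{\cali{E}}$ on generators, and the two group homomorphisms $K_0(S)\to\Z$ agree. This completes the verification, and I do not expect a genuine obstacle: the only substantive ingredient is the base-change isomorphism for $\Alt{n}{-}$, the rest being the elementary remark that an equality of additive (resp.\ multiplicative-group) homomorphisms that holds on generators holds everywhere. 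The single point deserving a word of care --- should ``$\lambda$-ring'' be read in the strong (special) sense of [SGA 6, Exp.~V--VI] rather than the weak sense defined above --- is that each $K_0(S)$ is then a special $\lambda$-ring by the splitting principle, and that the flag-bundle construction effecting the splitting is itself natural in $S$, so functoriality into special $\lambda$-rings is not disturbed either.
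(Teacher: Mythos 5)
Your proposal is correct and rests on exactly the same key ingredient as the paper's proof, namely the canonical base-change isomorphism $f^\ast{\rm Alt}^{n}_{\cali{O}_S}(\cali{E})\simeq {\rm Alt}^{n}_{\cali{O}_X}(f^\ast\cali{E})$, with the remaining verifications (extension from generators, augmentation compatibility, and the ``special'' $\lambda$-ring condition via the splitting principle, which the paper delegates to [SGA 6, Exp.~VI, 3.3]) being routine. You simply spell out more of the routine steps than the paper does.
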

\begin{proof}
The assertion that the above pre-$\lambda-$ring structure is a $\lambda-$ring structure means that there is a ring structure on $\Lambda \bigl(K_0(S)\bigr):=1+tK_0(S)[\![t]\!]$ (with its addition being induced from the multiplicative group $K_0(S)[\![t]\!]^\times$) and a pre-$\lambda-$ring structure on it such that $\lambda_t$ defined above is a ring homomorphism compatible with $\lambda^i$ (for details and a proof see [SGA 6, Exp. VI, 3.3]). The functoriality follows from the fact that for any morphism $f\colon X\to S$, there is a canonical isomorphism
\[
f^\ast{\rm Alt}^{n}_{\cali{O}_S}(\cali{E})\simeq {\rm Alt}^{n}_{\cali{O}_X}(f^\ast\cali{E}).\qedhere
\]
\end{proof}
For $s\in K_0(S)$ set
\begin{equation}\label{eq:gamma-series}
\gamma_t(s):=\sum_{i\geq 0}\gamma^i(s)t^i:=\lambda_{t/{1-t}}(s).
\end{equation}
The $\gamma-$filtration of $K_0(S)$ is defined as follows. For each $p\geq 0$ define ${\rm Fil}^q_\gamma(S)\subseteq K_0(S)$ to be the subgroup generated by all expressions $\gamma^{i_1}(x_1)\cdot \gamma^{i_2}(x_2)\cdot\dotsc\cdot\gamma^{i_p}(x_p)$ with $\epsilon(x_{j})=0$ and $\sum i_j\geq q$. It is convenient to define ${\rm Fil}^n_\gamma(S)=K_0(S)$ for each $n\leq 0$. By definition
\[
{\rm Fil}^1_\gamma(S)={\rm ker}(\epsilon).
\]
\begin{proposition}\label{pro:gamma-filtration} The following assertions hold.
\begin{enumerate}
\item Each ${\rm Fil}^q_\gamma(S)$ is an ideal of $K_0(S)$,
\item for $p,q\in\Z$ we have
\[
{\rm Fil}^p_\gamma(S)\cdot {\rm Fil}^q_\gamma(S)\subseteq {\rm Fil}^{p+q}_\gamma(S).
\]
\end{enumerate}
\end{proposition}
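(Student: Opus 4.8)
The plan is to reduce everything to formal identities in the universal $\lambda$-ring, following the standard argument from [SGA 6, Exp.~VI]. First I would establish part (1). The key point is the binomial-type formula
\[
\gamma_t(x+y)=\gamma_t(x)\cdot\gamma_t(y),
\]
which follows directly from the definition $\gamma_t=\lambda_{t/(1-t)}$ and the fact (recorded in \ref{lem:K0-lambda-functor}) that $\lambda_t$ is a group homomorphism into $1+tK_0(S)[\![t]\!]$. Comparing coefficients gives $\gamma^n(x+y)=\sum_{i+j=n}\gamma^i(x)\gamma^j(y)$, and in particular, for $x$ with $\epsilon(x)=0$ and arbitrary $z\in K_0(S)$, writing $z=\epsilon(z)\cdot 1+(z-\epsilon(z)\cdot 1)$ and using $\gamma^i(1)=0$ for $i\geq 1$ (since $\lambda_t(1)=1+t$, so $\gamma_t(1)=1+t/(1-t)\cdot\text{(one term)}$—more precisely $\gamma_t(1)=\frac{1}{1-t}\cdot$, let me just say: from $\lambda_t(n)=(1+t)^n$ one computes $\gamma_t(n)=(1-t)^{-n}$, hence $\gamma^i$ of an integer class is a binomial coefficient), one sees that multiplying a generator $\gamma^{i_1}(x_1)\cdots\gamma^{i_p}(x_p)$ of ${\rm Fil}^q_\gamma(S)$ by any $z\in K_0(S)$ again lands in a sum of such generators with total degree $\geq q$. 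Thus ${\rm Fil}^q_\gamma(S)$ is an ideal.

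For part (2), the inclusion ${\rm Fil}^p_\gamma(S)\cdot{\rm Fil}^q_\gamma(S)\subseteq{\rm Fil}^{p+q}_\gamma(S)$ is almost immediate from the very definition of the $\gamma$-filtration: a product of a generator of ${\rm Fil}^p_\gamma$ (a product of $\gamma^{i_a}(x_a)$ with $\epsilon(x_a)=0$, $\sum i_a\geq p$) with a generator of ${\rm Fil}^q_\gamma$ (likewise with $\sum i_b\geq q$) is again a product of terms $\gamma^{i}(x)$ with $\epsilon(x)=0$ and total index $\geq p+q$, hence by definition lies in ${\rm Fil}^{p+q}_\gamma(S)$. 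Since such generators span the respective subgroups and multiplication is bilinear, the inclusion follows for $p,q\geq 0$. The case where $p$ or $q$ is $\leq 0$ follows from part (1): if, say, $p\leq 0$ then ${\rm Fil}^p_\gamma(S)=K_0(S)$ and the claim reads $K_0(S)\cdot{\rm Fil}^q_\gamma(S)\subseteq{\rm Fil}^{q+p}_\gamma(S)$; when $q+p\leq 0$ this is trivial, and when $q+p\geq 1$ one uses that ${\rm Fil}^q_\gamma(S)\subseteq{\rm Fil}^{q+p}_\gamma(S)$ is already an ideal by (1) (monotonicity of the filtration being clear from the definition), so $K_0(S)\cdot{\rm Fil}^q_\gamma(S)\subseteq{\rm Fil}^q_\gamma(S)\subseteq{\rm Fil}^{q+p}_\gamma(S)$.

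The only genuinely non-formal input is the multiplicativity of $\lambda_t$, i.e.\ the $\lambda$-ring axioms, which is exactly the content of \ref{lem:K0-lambda-functor} (proved via [SGA 6, Exp.~VI, 3.3]); everything else is manipulation of power series. The one place requiring a little care—and the step I would expect to be the main obstacle to a clean write-up rather than a deep difficulty—is making sure the generators really are closed under multiplication by an \emph{arbitrary} ring element in part (1): this needs the reduction of $z$ to $\epsilon(z)\cdot 1$ plus an augmentation-zero part, together with the explicit computation of $\gamma^i$ on the unit (equivalently, on integer multiples of the unit), so that the "$\epsilon=0$" condition defining the generators is preserved. Once that bookkeeping is in place the proof is complete.
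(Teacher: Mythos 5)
Your proof is correct and takes essentially the same route as the paper, which disposes of both assertions with ``clear from the definition''; your write-up is exactly the intended unwinding (products of generators are generators of total degree $\geq p+q$, and an arbitrary $z$ splits as $\epsilon(z)\cdot 1$ plus the ${\rm Fil}^1_\gamma$-element $\gamma^1\bigl(z-\epsilon(z)\cdot 1\bigr)$, so ideal-ness follows from part (2)). The passing claim $\gamma^i(1)=0$ for $i\geq 1$ is false (in fact $\gamma_t(1)=(1-t)^{-1}$), but you correct it mid-sentence and it is never needed: for the $\epsilon(z)\cdot 1$ component one only uses that it multiplies a generator by an integer, not any value of $\gamma^i$ on it.
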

\begin{proof}
These are clear from the definition.
\end{proof}
Now consider the group $K_0'(X)$ defined for each noetherian scheme $X$. Through the pairing~\eqref{eq:K0G0-pairing} we shall always consider $K_0'(X)$ as a module over $K_0(X)$. Recall the definition of coniveau (or topological) filtration on $K_0'(X)$: for each $j\geq 0$, the subgroup ${\rm Fil}_{\rm top}^jK_0'(X)\subseteq K_0'(X)$ is defined to be generated by classes of coherenet modules having a support of codimention $\geq j$. Note that by definition
\[
\Fil{i}{\rm top}{S}=0\quad{\rm for\ all\ }i>\dime{S}.
\]
\begin{proposition}\label{lem:k0-cycle-length}
Let $\cali{E}$ be a coherent module over a noetherian scheme $X$. If $\codim{\supp{\cali{E}}}{X}\geq q$, then
\[
\class{}{\cali{E}}-\sum_{x\in X^{(q)}}{\rm lg}_{\cali{O}_{X,x}}(\cali{E}_x)\class{}{\cali{O}_{\overline{\{x\}}}}\in \Fil{q+1}{\rm top}{X}
\]
where $X^{(q)}\subseteq X$ is the set of generic points of integral closed subschemes of codimension $q$ in $X$.
\end{proposition}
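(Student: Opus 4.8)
The plan is to reduce, by a standard dévissage of $\cali{E}$, to classes of structure sheaves of integral closed subschemes of $X$, and then to read off the coefficients by localizing at generic points. Two elementary remarks are used throughout. First, $\codim{-}{X}$ is monotone: if $Y'\subseteq Y$ are closed then $\codim{Y'}{X}\geq\codim{Y}{X}$; consequently, if $W\subseteq X$ is an integral closed subscheme with $\codim{W}{X}=q$, then every proper closed subset of $W$ has codimension $\geq q+1$ in $X$, so a coherent module supported on such a subset has class in $\Fil{q+1}{\rm top}{X}$. Second, for $x\in X^{(q)}$ the point $x$ is the generic point of the integral scheme $\overline{\{x\}}$, so $\bigl(\cali{O}_{\overline{\{x\}}}\bigr)_x=\kappa(x)$ is a field.

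First I would invoke the usual dévissage of a coherent sheaf on a noetherian scheme: there is a finite filtration $0=\cali{E}_0\subseteq\dotsb\subseteq\cali{E}_m=\cali{E}$ by coherent submodules, and for each $j$ an integral closed subscheme $Z_j\subseteq X$ and a nonzero coherent ideal sheaf $\cali{I}_j\subseteq\cali{O}_{Z_j}$, with $\cali{E}_j/\cali{E}_{j-1}\simeq(\iota_{Z_j})_\ast\cali{I}_j$, where $\iota_{Z_j}\colon Z_j\hookrightarrow X$ is the inclusion. Each $\cali{E}_j/\cali{E}_{j-1}$ is a subquotient of $\cali{E}$, so $Z_j\subseteq\supp{\cali{E}}$ and hence $\codim{Z_j}{X}\geq q$. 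Applying the first remark to the exact sequence $0\to\cali{I}_j\to\cali{O}_{Z_j}\to\cali{O}_{Z_j}/\cali{I}_j\to 0$, in which $\cali{O}_{Z_j}/\cali{I}_j$ is supported on the proper closed subset $V(\cali{I}_j)\subsetneq Z_j$, gives $\class{}{(\iota_{Z_j})_\ast\cali{I}_j}\equiv\class{}{\cali{O}_{Z_j}}$ modulo $\Fil{q+1}{\rm top}{X}$ if $\codim{Z_j}{X}=q$, and $\class{}{(\iota_{Z_j})_\ast\cali{I}_j}\in\Fil{q+1}{\rm top}{X}$ if $\codim{Z_j}{X}>q$. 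Summing over $j$ and grouping,
\[
\class{}{\cali{E}}\ \equiv\ \sum_{x\in X^{(q)}}a_x\,\class{}{\cali{O}_{\overline{\{x\}}}}\pmod{\Fil{q+1}{\rm top}{X}},\qquad a_x:=\#\{\,j : Z_j=\overline{\{x\}}\,\},
\]
which is a finite sum.

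It then remains to check that $a_x=\lgth{\cali{O}_{X,x}}{\cali{E}_x}$ for each $x\in X^{(q)}$. Localizing the above filtration at $x$ yields a filtration of $\cali{E}_x$ whose successive quotients are the stalks $\bigl((\iota_{Z_j})_\ast\cali{I}_j\bigr)_x$; such a stalk vanishes unless $x\in Z_j$, and if $x\in Z_j$ then $\overline{\{x\}}\subseteq Z_j$ with $\codim{Z_j}{X}\geq q=\codim{\overline{\{x\}}}{X}$, which forces $Z_j=\overline{\{x\}}$; in that case the stalk is the nonzero ideal $(\cali{I}_j)_x$ of the field $\kappa(x)$, hence equals $\kappa(x)$. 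Thus $\cali{E}_x$ carries a finite filtration with exactly $a_x$ successive quotients equal to $\kappa(x)$ and the rest zero, so $\lgth{\cali{O}_{X,x}}{\cali{E}_x}=a_x<\infty$; this also justifies, a posteriori, that the sum in the statement is finite and well defined. Substituting into the displayed congruence proves the proposition.

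The only real obstacle is the dévissage itself; granting it, the rest is routine bookkeeping with supports, codimensions (only monotonicity of $\codim{-}{X}$ is needed, which is what is available over a general noetherian base), lengths, and stalks of direct images along closed immersions. If one prefers a self-contained argument, one may replace the dévissage by noetherian induction on $\supp{\cali{E}}$: pick a generic point $\xi$ of $\supp{\cali{E}}$ and let $\cali{I}$ be the ideal sheaf of $W=\overline{\{\xi\}}$; filter $\cali{E}$ by the powers $\cali{I}^k\cali{E}$, choosing $N$ with $\cali{I}^N\cali{E}$ of strictly smaller support (possible because $\cali{E}_\xi$ has finite length over $\cali{O}_{X,\xi}$), note that the subquotients $\cali{I}^k\cali{E}/\cali{I}^{k+1}\cali{E}$ are coherent $\cali{O}_W$-modules, reduce each such module on the integral scheme $W$ to a multiple of $\class{}{\cali{O}_W}$ by removing its torsion subsheaf and then peeling off rank-one torsion-free subsheaves compared with $\cali{O}_W$ via a Mayer--Vietoris sequence inside the constant sheaf of the function field of $W$, and finish by applying the induction hypothesis to $\cali{I}^N\cali{E}$.
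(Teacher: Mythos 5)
Your proof is correct and follows essentially the same route as the paper: both arguments reduce the statement via d\'evissage to classes of structure sheaves of integral closed subschemes contained in $\supp{\cali{E}}$, using additivity of the class and of the length function. The paper phrases this through the Serre-subcategory form of d\'evissage from [EGA III$_1$, 3.1.2], while you spell out the concrete filtration with quotients $(\iota_{Z_j})_\ast\cali{I}_j$ and verify the coefficients by localizing at the points of $X^{(q)}$, which is just a more explicit packaging of the same argument.
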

\begin{proof}
This is proved in [SGA 6, Exp. X, 1.2] where the definition and results on the topological filtration is given in terms of dimension rather than codimension. We re-present the proof written for the filtration above. For this set $X'=\supp{\cali{E}}$. Let $K'$ be the full subcategory of the category of coherent modules over $X$ formed by modules $\cali{E}'$ with $\supp{\cali{E}'}\subseteq X'$ and for which the assertion holds. Additivity of the length function implies that $K'$ is a serre subcategory. Also $0\in {K'}$ and for each integral closed subscheme $Z\subseteq X'$, the coherent ${X}-$module $\cali{O}_{Z}$ is in ${K'}$. The statement follows from the d\'evissage [EGA III$_1$, 3.1.2].
\end{proof}
\begin{lemma}\label{lem:top-filtration-functor}
Let $f\colon S\to T$ be a morphism of noetherian schemes.
\begin{enumerate}
 \item If $f$ is flat, then $f^\ast \Fil{q}{\rm top}{T}\subseteq \Fil{q}{\rm top}{S}$.
\item If $f$ is proper and surjective, then $f_\ast \Fil{q}{\rm top}{S}\subseteq \Fil{q-d}{\rm top}{T}$ where $d\in Z$ is any integer with
\[
d\geq {\rm dim}({f^{-1}(f(s))})\quad {\rm for\ all\ }s\in S.
\]
\end{enumerate}
\end{lemma}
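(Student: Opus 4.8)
The plan is to reduce both parts to a statement about how the codimension of a support transforms under $f$, working on generators of the two filtrations. For part (1): since $f$ is flat, $f^{\ast}$ is exact and hence descends to a homomorphism $K_{0}'(T)\to K_{0}'(S)$ with $f^{\ast}\class{}{\cali{G}}=\class{}{f^{\ast}\cali{G}}$, so it is enough to treat a generator $\class{}{\cali{G}}$ with $\codim{\supp{\cali{G}}}{T}\geq q$. One has $\supp{f^{\ast}\cali{G}}=f^{-1}(\supp{\cali{G}})$: the inclusion $\supp{f^{\ast}\cali{G}}\subseteq f^{-1}(\supp{\cali{G}})$ is automatic, and the reverse inclusion uses that for flat $f$ the local homomorphisms $\cali{O}_{T,f(s)}\to\cali{O}_{S,s}$ are faithfully flat. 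Since a flat local homomorphism of noetherian local rings satisfies going-down, $\dime{\cali{O}_{S,s}}\geq\dime{\cali{O}_{T,f(s)}}$ for every $s$; as $f(s)\in\supp{\cali{G}}$ whenever $s\in f^{-1}(\supp{\cali{G}})$, taking infima yields $\codim{f^{-1}(\supp{\cali{G}})}{S}\geq\codim{\supp{\cali{G}}}{T}\geq q$, which is what is needed.

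For part (2), recall that $f_{\ast}$ on $K_{0}'$ is $\class{}{\cali{F}}\mapsto\sum_{i\geq 0}(-1)^{i}\class{}{R^{i}f_{\ast}\cali{F}}$, where each $R^{i}f_{\ast}\cali{F}$ is coherent on $T$ because $f$ is proper and the schemes are noetherian; so for a generator $\class{}{\cali{F}}$ with $\codim{\supp{\cali{F}}}{S}\geq q$ it suffices to show $\codim{\supp{R^{i}f_{\ast}\cali{F}}}{T}\geq q-d$ for every $i$. As $f$ is closed and $R^{i}f_{\ast}\cali{F}$ vanishes over the open complement of the closed set $f(\supp{\cali{F}})$, we get $\supp{R^{i}f_{\ast}\cali{F}}\subseteq f(\supp{\cali{F}})$; and decomposing $\supp{\cali{F}}$ into its finitely many irreducible components $Z_{j}$, each $f(Z_{j})$ is irreducible and closed (with generic point the image of that of $Z_{j}$), so $\codim{f(\supp{\cali{F}})}{T}=\min_{j}\codim{f(Z_{j})}{T}$. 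Everything thus reduces to the inequality
\[
\codim{f(Z)}{T}\ \geq\ \codim{Z}{S}-d
\]
for an irreducible closed $Z\subseteq S$ with generic point $\zeta$, under the hypothesis $d\geq\dime{f^{-1}(f(\zeta))}$.

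This inequality is the heart of the argument, and I would obtain it from the classical dimension estimate for a local homomorphism $A\to B$ of noetherian local rings, namely $\dime{B}\leq\dime{A}+\dime{B/\mathfrak{m}_{A}B}$ (proved by lifting a system of parameters of $A$ together with one of $B/\mathfrak{m}_{A}B$ to one of $B$). Applied with $w=f(\zeta)$, $A=\cali{O}_{T,w}$ and $B=\cali{O}_{S,\zeta}$, and using $B/\mathfrak{m}_{A}B=\cali{O}_{f^{-1}(w),\zeta}$, it reads $\codim{Z}{S}\leq\codim{f(Z)}{T}+\dime{\cali{O}_{f^{-1}(w),\zeta}}$, while $\dime{\cali{O}_{f^{-1}(w),\zeta}}\leq\dime{f^{-1}(w)}\leq d$ by the fibre-dimension hypothesis; combining gives the displayed inequality, hence $\codim{f(\supp{\cali{F}})}{T}\geq q-d$ and finally $f_{\ast}\class{}{\cali{F}}\in\Fil{q-d}{\rm top}{T}$. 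Since the dimension estimate is standard, the part that actually needs care is the reduction bookkeeping — passing to irreducible components of the support, identifying the generic point of the image, and checking that only properness of $f$ and the bound on fibre dimensions (not surjectivity) are really used.
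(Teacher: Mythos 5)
Your proof is correct and follows essentially the same route as the paper's: both parts reduce to the local dimension inequality $\dim\cali{O}_{S,s}\le\dim\cali{O}_{T,t}+\dim(\cali{O}_{S,s}\otimes_{\cali{O}_{T,t}}k(t))$ (EGA IV$_2$, 5.5.2), together with $\supp{f^\ast\cali{G}}=f^{-1}\supp{\cali{G}}$ for (1) and $\supp{R^if_\ast\cali{F}}\subseteq f(\supp{\cali{F}})$ for (2). Your side remark that surjectivity of $f$ is never actually used is accurate.
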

\begin{proof}For these we will use the result [EGA IV$_2$, 5.5.2] on the dimension: let $t=f(s)$, then
\[
{\rm dim}(\cali{O}_{S,s})\leq {\rm dim}(\cali{O}_{T,t})+{\rm dim}(\cali{O}_{S,s}\otimes_{\cali{O}_{T,t}}k(t)).
\]
For (1): Note that $\supp{f^\ast\cali{E}}=f^{-1}\supp{\cali{E}}$ for a (coherent) $T-$module $\cali{E}$. Since $f$ is flat, the above becomes an equality and hence for each closed subset $Z$ of $T$ we have
\[
\codim{f^{-1}Z}{S}\geq \codim{Z}{T}.
\]
To prove (2) note that for each coherent $S-$module $\cali{E}$ it follows (from e.g. the flat base change) that $\supp{R^nf_\ast\cali{E}}\subseteq f\supp{\cali{E}}$ for all $n\geq 0$. On the other hand for each closed subset $Z\subseteq S$, from the above inequality for dimensions we conclude that
\[
\codim{Z}{S}\leq \codim{fZ}{T}+d
\]
where $d$ is larger than dimension of $\cali{O}_{S,s}\otimes_{\cali{O}_{T,t}}k(t)$ where $t=f(s)$ for any $s\in Z$.
\end{proof}

\begin{example}\label{ex:G0-localization-pullpack}
For $s\in S$, the canonical morphism $h\colon \spec{\cali{O}_{S,s}}\to S$ is flat. The induced (surjective) homomorphism from~\ref{lem:top-filtration-functor} is
\[
h^\ast\colon K_0'(S)\twoheadrightarrow K_0'(\cali{O}_{S,s}),\quad \class{}{\cali{E}}\mapsto \class{}{\cali{E}_s}.
\]
Let $\cali{E}$ be a coherent $S-$module. It follows from the proof of~\ref{lem:top-filtration-functor} that for any $s\in S$ we have
\begin{equation}\label{eq:G0-localization-pullpack}
\codim{\supp{\cali{E}_s}}{\cali{O}_{S,s}}\geq \codim{\supp{\cali{E}}}{S}.
\end{equation}
In the particular case where $s$ is a maximal point, the homomorphism $h^\ast$ is given by $\class{}{\cali{E}}\mapsto {\rm lg}_{\cali{O}_{S,s}}(\cali{E}_s)$. It follows that if $\class{}{\cali{E}}\in \Filt{1}{\rm top}$, then $\codim{\supp{\cali{E}}}{S}\geq 1$.  
\end{example}
\begin{proposition}\label{thm:G0-localization}
For any closed subscheme $i\colon Z\to S$ of a noetherian scheme $S$ with the open complement $j\colon U\to S$, the sequence
\[
K_0'(Z)\xrightarrow{i_\ast} K_0'(S)\xrightarrow{j^\ast} K_0'(U)\to 0
\]
is exact.
\end{proposition}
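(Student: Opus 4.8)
The plan is to establish exactness at $K_0'(U)$, vanishing of the composite, and — the one substantial point — exactness at $K_0'(S)$.

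First I would dispose of the formal parts. For surjectivity of $j^\ast$: given a coherent $\cali{O}_U$-module $\cali{F}$, the quasi-coherent sheaf $j_\ast\cali{F}$ on $S$ is the filtered union of its coherent submodules, and since $U$ is noetherian with $(j_\ast\cali{F})|_U=\cali{F}$ coherent, some coherent submodule $\widetilde{\cali{F}}\subseteq j_\ast\cali{F}$ already satisfies $\widetilde{\cali{F}}|_U=\cali{F}$; then $j^\ast[\widetilde{\cali{F}}]=[\cali{F}]$, and as classes of coherent modules generate $K_0'(U)$ the map $j^\ast$ is surjective. For the composite: if $\cali{G}$ is coherent on $Z$, then $i_\ast\cali{G}$ is supported on the closed set $Z$, so $(i_\ast\cali{G})|_U=0$ and $j^\ast i_\ast$ vanishes on a generating set of $K_0'(Z)$, hence on all of $K_0'(Z)$.

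The core is the inclusion $\ker(j^\ast)\subseteq\operatorname{im}(i_\ast)$, which I would deduce from the localization theorem for Grothendieck groups of abelian categories. Let $\mathcal{A}=\operatorname{Coh}(S)$ and let $\mathcal{B}\subseteq\mathcal{A}$ be the Serre subcategory of coherent modules whose support lies in $Z$. By a theorem of Gabriel, restriction to $U$ identifies the quotient abelian category $\mathcal{A}/\mathcal{B}$ with $\operatorname{Coh}(U)$, the quotient functor being $j^\ast$. Moreover, by the d\'evissage [EGA III$_1$, 3.1.2] already invoked in the proof of \ref{lem:k0-cycle-length}, every coherent module supported on $Z$ has a finite filtration whose successive quotients are pushed forward from $Z$, so $K_0'(Z)\to K_0(\mathcal{B})$ is surjective; consequently the image of $i_\ast\colon K_0'(Z)\to K_0'(S)$ equals the image of $K_0(\mathcal{B})\to K_0(\mathcal{A})$. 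It therefore suffices to prove that for any abelian category $\mathcal{A}$ and Serre subcategory $\mathcal{B}$, the sequence $K_0(\mathcal{B})\to K_0(\mathcal{A})\to K_0(\mathcal{A}/\mathcal{B})\to 0$ is exact.

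For that statement, the quotient functor is essentially surjective and annihilates $\mathcal{B}$, so only $\ker\subseteq\operatorname{im}$ at $K_0(\mathcal{A})$ needs proof. I would construct a homomorphism $\varphi\colon K_0(\mathcal{A}/\mathcal{B})\to K_0(\mathcal{A})/\operatorname{im}K_0(\mathcal{B})$ sending the class of an object to the class of an arbitrary lift of it in $\mathcal{A}$, check it is well defined, and observe that it is then a two-sided inverse of the homomorphism induced by the quotient functor — which is the assertion. Well-definedness is where essentially all the work, and the only real obstacle, lies: one must show (i) two lifts $A,A'$ of isomorphic objects of $\mathcal{A}/\mathcal{B}$ satisfy $[A]\equiv[A']$ modulo $\operatorname{im}K_0(\mathcal{B})$, using that an isomorphism in $\mathcal{A}/\mathcal{B}$ is represented by a morphism in $\mathcal{A}$ from a subobject of $A$ to a quotient of $A'$ whose kernel, cokernel, and the relevant sub- and quotient objects all lie in $\mathcal{B}$; and (ii) a short exact sequence in $\mathcal{A}/\mathcal{B}$ can be lifted, after altering its three terms by sub- and quotient objects belonging to $\mathcal{B}$, to a genuine short exact sequence in $\mathcal{A}$. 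The delicate bookkeeping with the calculus of fractions defining $\mathcal{A}/\mathcal{B}$ — tracking the $\mathcal{B}$-errors in morphisms and in exact sequences — is the crux; everything else is formal. (Alternatively one can argue entirely with sheaves: write an element of $\ker j^\ast$ as $[\cali{F}]-[\cali{G}]$; from $[\cali{F}|_U]=[\cali{G}|_U]$ in $K_0'(U)$ produce, via the standard presentation of the Grothendieck group of an abelian category, short exact sequences over $U$ with these end terms; extend them to complexes of coherent $\cali{O}_S$-modules that are exact over $U$; then the given element differs from $i_\ast$ of the alternating sum of the homology sheaves — which are supported on $Z$ — by the class of an exact complex, hence by zero. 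The bookkeeping is the same.)
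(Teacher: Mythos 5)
Your argument is correct. The paper itself offers no proof of this proposition: it simply cites [SGA 6, Exp.\ IX, 1.1], so your write-up supplies the argument that citation stands in for, and it is the standard one. The two substantive inputs are exactly the ones you isolate: (a) Gabriel's identification of $\operatorname{Coh}(S)/\operatorname{Coh}_Z(S)$ with $\operatorname{Coh}(U)$ together with the Heller--Reiner exact sequence $K_0(\mathcal{B})\to K_0(\mathcal{A})\to K_0(\mathcal{A}/\mathcal{B})\to 0$ for a Serre subcategory $\mathcal{B}$ of an abelian category $\mathcal{A}$; and (b) d\'evissage (filtering a coherent module killed on $U$ by powers of the ideal sheaf of $Z$) to see that $K_0'(Z)\to K_0(\mathcal{B})$ is onto, whence $\operatorname{im}(i_\ast)=\operatorname{im}\bigl(K_0(\mathcal{B})\to K_0'(S)\bigr)$. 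One hypothesis you use silently: the paper defines $K_0'$ via $D^b_{\rm coh}$, and it is the identification with the na\"{i}ve Grothendieck group of $\operatorname{Coh}$ (stated in \S 2 for noetherian schemes) that lets you argue entirely with abelian categories; it is worth saying so. The only place the proposal stops short of a complete proof is the well-definedness of the inverse map $\varphi$, but you have correctly located the two verifications required (isomorphisms in $\mathcal{A}/\mathcal{B}$ change the class of a lift only by elements of $\operatorname{im}K_0(\mathcal{B})$; short exact sequences in $\mathcal{A}/\mathcal{B}$ lift after modifying their terms by objects of $\mathcal{B}$), and these together constitute the classical localization theorem for $K_0$ of abelian categories, which one may also simply cite. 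Your parenthetical sheaf-level variant is a sound alternative and is essentially the same computation unwound.
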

\begin{proof}
A proof can be found in [SGA 6, Exp. IX, 1.1].
\end{proof}
\begin{lemma}\label{lem:j*-surjective}
Let the notations be as in~\ref{thm:G0-localization}. For each $q$, the homomorphism $j^\ast\colon \Fil{q}{\rm top}{S}\to \Fil{q}{\rm top}{U}$ is surjective.
\end{lemma}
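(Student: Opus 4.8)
The plan is to lift the natural generators of $\Fil{q}{\rm top}{U}$ one at a time. By definition $\Fil{q}{\rm top}{U}$ is generated by the classes $\class{}{\cali{F}}$ of coherent $\cali{O}_U$-modules $\cali{F}$ with $\codim{\supp{\cali{F}}}{U}\ge q$, and $\Fil{q}{\rm top}{S}$ is a subgroup, so it suffices to produce for each such $\cali{F}$ a coherent $\cali{O}_S$-module $\cali{G}$ with $\codim{\supp{\cali{G}}}{S}\ge q$ and $j^\ast\cali{G}\simeq\cali{F}$; then $\class{}{\cali{G}}$ is the desired lift.

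The construction of $\cali{G}$ is the point where one must be careful. Since $S$ is noetherian, $j$ is a quasi-compact and separated open immersion, so $j_\ast\cali{F}$ is a quasi-coherent $\cali{O}_S$-module, and---$S$ being noetherian---it is the filtered union of its coherent $\cali{O}_S$-submodules. Restricting to $U$, where $j^\ast j_\ast\cali{F}\simeq\cali{F}$, one gets an increasing family of coherent submodules of $\cali{F}$ with union $\cali{F}$; since $\cali{F}$ is noetherian this family stabilizes, so one of the submodules of $j_\ast\cali{F}$---call it $\cali{G}$---restricts to all of $\cali{F}$ on $U$. (Equivalently, divide any coherent extension of $\cali{F}$ by its largest submodule supported on $Z$.) The essential feature is the inclusion $\cali{G}\hookrightarrow j_\ast\cali{F}$: a nonzero local section of $\cali{G}$ restricts to a nonzero section over $U$, so its support meets $U$; hence no associated point of $\cali{G}$ lies in the closed set $Z=S\setminus U$.

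It then remains to bound the codimension of $\supp{\cali{G}}$. The generic points of the irreducible components of $\supp{\cali{G}}$ are among the associated points of $\cali{G}$, hence all lie in $U$; therefore $\supp{\cali{G}}$ equals the closure in $S$ of $\supp{\cali{G}}\cap U=\supp{\cali{F}}$, and its components are the closures of the components of $\supp{\cali{F}}$. Since these generic points lie in $U$, the local rings of $S$ and of $U$ agree at them, so computing codimension component by component gives $\codim{\supp{\cali{G}}}{S}=\codim{\supp{\cali{F}}}{U}\ge q$, as wanted. (This last comparison is of the same kind as the one in the proof of \ref{lem:top-filtration-functor}, and is also reflected in \eqref{eq:G0-localization-pullpack}.)

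The only real obstacle is the one addressed in the second paragraph: an arbitrary coherent extension of $\cali{F}$ can acquire new irreducible components inside $Z$, possibly of codimension $<q$, so a naive ``extend and pass to classes'' argument fails; realizing the extension inside $j_\ast\cali{F}$ (equivalently, killing its $Z$-torsion) is exactly what prevents this. Everything else is a routine manipulation of support, associated points and codimension under the open immersion $j$.
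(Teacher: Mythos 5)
Your proposal is correct and follows essentially the same route as the paper: both realize the extension as a coherent $\cali{O}_S$-submodule $\cali{G}$ of $j_\ast\cali{F}$, so that $\supp{\cali{G}}$ lies in the closure of $\supp{\cali{F}}$, whose codimension in $S$ equals $\codim{\supp{\cali{F}}}{U}$. You merely spell out in more detail the standard extension lemma and the comparison of codimensions at the generic points, which the paper takes for granted.
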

\begin{proof}
Let $\cali{E}$ be a coherent $U-$module with $\codim{\supp{\cali{E}}}{U}\geq q$. There is an injective homomorphism $\cali{E}\to j^\ast j_\ast \cali{E}$ of $\cali{O}_U-$modules. Therefore there is an $\cali{O}_S-$submodule $\cali{G}$ of $j_\ast(\cali{E})$ with $j^\ast (\cali{G})\simeq \cali{E}$. Note that \[\supp{\cali{G}}\subseteq \supp{j_\ast(\cali{E})}\subseteq (\text{the closure of ${\supp{\cali{E}}}$ in $S$}).\] 
But the codimension of the latter is exactly $\codim{\supp{\cali{E}}}{U}$.
\end{proof}

As mentioned earlier $K_0'(S)$ is considered as a module over the ring $K_0(S)$. Considering both of these objects with their filtrations we note that $K_0'(S)$ is a filtered modules over the filtered ring $K_0(S)$, that is:
\begin{proposition}\label{thm:gamma-top-filtration}
The following assertions hold.
\begin{enumerate}[{\rm \qquad 1.}]
\item Each $\Fil{q}{\rm top}{S}$ is a $K_0(S)-$submodule of $K_0'(S)$.
\item For all integers $p,q\in\Z$ we have
\[
\Fil{p}{\gamma}{S}\cdot \Fil{q}{\rm top}{S}\subseteq \Fil{q+p}{\rm top}{S}.
\]
\end{enumerate}
\end{proposition}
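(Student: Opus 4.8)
The plan for part~(1) is immediate. The group $K_0(S)$ is generated by classes $\class{}{\cali{F}}$ of locally free coherent modules, and for such $\cali{F}$ the action of $\class{}{\cali{F}}$ on $K_0'(S)$ through \eqref{eq:K0G0-pairing} sends $\class{}{\cali{E}}$ to $\class{}{\cali{F}\otimes_{\cali{O}_S}\cali{E}}$ (no higher Tor, $\cali{F}$ being flat). Since $\supp{\cali{F}\otimes_{\cali{O}_S}\cali{E}}\subseteq\supp{\cali{E}}$ has codimension at least $\codim{\supp{\cali{E}}}{S}$, multiplication by such a generator preserves each $\Fil{q}{\rm top}{S}$, which is the claim.

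For part~(2), the cases $p\le 0$ or $q\le 0$ follow from~(1) and the fact that the coniveau filtration decreases, so assume $p,q\ge 1$. As $\Fil{p}{\gamma}{S}$ is generated by monomials $\gamma^{i_1}(x_1)\cdots\gamma^{i_r}(x_r)$ with $x_j\in\kernel(\epsilon)$ and $\sum_j i_j\ge p$, and each $\gamma^{i_j}(x_j)\in K_0(S)$, multiplying such a monomial into $\Fil{q}{\rm top}{S}$ one factor at a time reduces everything to the single-factor statement
\[
(\star)\qquad \gamma^n(x)\cdot\Fil{q}{\rm top}{S}\subseteq\Fil{q+n}{\rm top}{S},\qquad x\in\kernel(\epsilon),\ n,q\ge 1 .
\]
To prove $(\star)$ I would use, as in the proof of~\ref{lem:k0-cycle-length}, that $\Fil{q}{\rm top}{S}$ is generated by the classes $\class{}{\cali{O}_Z}$ with $i\colon Z\hookrightarrow S$ integral closed of codimension $\ge q$. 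For such a $Z$ the projection formula~\ref{thm:projection-formula} (valid for classes from $K_0(S)$, by the remark following it) together with functoriality of the $\gamma$-operations gives $\gamma^n(x)\cdot\class{}{\cali{O}_Z}=i_\ast\bigl(\gamma^n(i^\ast x)\bigr)$, the last term understood via the image of $\gamma^n(i^\ast x)$ in $K_0'(Z)$; and $i^\ast x\in\kernel(\epsilon_Z)$. Since $\codim{Z'}{S}\ge\codim{Z'}{Z}+\codim{Z}{S}$ for every closed $Z'\subseteq Z$, a closed immersion satisfies $i_\ast\Fil{m}{\rm top}{Z}\subseteq\Fil{m+\codim{Z}{S}}{\rm top}{S}$. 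Hence $(\star)$ reduces to the following assertion about integral schemes:
\[
(\dagger)\qquad\text{for }Z\text{ integral noetherian},\ \zeta\in\kernel(\epsilon_Z),\ n\ge 1,\ \text{the image of }\gamma^n(\zeta)\text{ in }K_0'(Z)\text{ lies in }\Fil{n}{\rm top}{Z}.
\]

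The case $n=1$ of $(\dagger)$ is direct: $Z$ being integral, localization at its unique generic point $\eta$ carries $\zeta$ to its rank $\epsilon_Z(\zeta)=0$ in $K_0(\kappa(\eta))\cong\Z$, so the image of $\zeta$ in $K_0'(Z)$ has length $0$ at $\eta$, and~\ref{lem:k0-cycle-length} with $q=0$ places it in $\Fil{1}{\rm top}{Z}$. Applying this to the generators $\class{}{\cali{O}_W}$ of $\Fil{q}{\rm top}{T}$ yields the auxiliary fact that, for any invertible module $\cali{L}$ on a connected noetherian scheme $T$,
\[
(\ell)\qquad (\class{}{\cali{L}}-\class{}{\cali{O}_T})\cdot\Fil{q}{\rm top}{T}\subseteq\Fil{q+1}{\rm top}{T}\qquad(q\ge 0).
\]
For $n\ge 2$ the idea is the splitting principle. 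Write $\zeta=\class{}{\cali{E}}-\class{}{\cali{F}}$ with $\cali{E},\cali{F}$ of the same rank, and let $\pi\colon T\to Z$ be an iterated flag bundle splitting both $\cali{E}$ and $\cali{F}$; then $\pi$ is smooth and projective (in particular flat), $T$ is again integral, and $\pi^\ast\zeta=\sum_k a_k-\sum_l b_l$ with each $a_k,b_l$ of the form $\class{}{\cali{L}}-\class{}{\cali{O}_T}$. Since $\gamma_t(\class{}{\cali{L}}-\class{}{\cali{O}_T})=1+(\class{}{\cali{L}}-\class{}{\cali{O}_T})t$, one has $\gamma_t(\pi^\ast\zeta)=\prod_k(1+a_kt)\cdot\prod_l(1+b_lt)^{-1}$, so $\gamma^n(\pi^\ast\zeta)=\pi^\ast\gamma^n(\zeta)$ is an integral combination of $n$-fold products of the $a_k$'s and $b_l$'s; applying $(\ell)$ once per factor to $\class{}{\cali{O}_T}$ shows that the image of $\pi^\ast\gamma^n(\zeta)$ in $K_0'(T)$ lies in $\Fil{n}{\rm top}{T}$. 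It remains to descend this to $Z$: the projective bundle formula of [SGA~6, Exp.~VI], refined to the statement $\Fil{n}{\rm top}{\mathbb{P}(\cali{V})}=\bigoplus_{i\ge 0}\xi^i\,p^\ast\Fil{n-i}{\rm top}{Y}$ for $p\colon\mathbb{P}(\cali{V})\to Y$ with $\xi=\class{}{\cali{O}(1)}-\class{}{\cali{O}_{\mathbb{P}(\cali{V})}}$ (which one deduces from~\ref{thm:G0-localization},~\ref{lem:k0-cycle-length} and $(\ell)$, by induction on $n$ and passage to the associated graded), shows that $\pi^\ast$ is injective and reflects the coniveau filtration. Hence the image of $\gamma^n(\zeta)$ in $K_0'(Z)$ lies in $\Fil{n}{\rm top}{Z}$, proving $(\dagger)$ and therefore the proposition.

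The step I expect to take the most work is this last descent, i.e.\ checking that the projective bundle decomposition is strictly compatible with the coniveau filtration; everything preceding it is formal bookkeeping with the projection formula and~\ref{lem:k0-cycle-length}. (One can instead try to induct on $\dim Z$, but the top-dimensional components of a reducible scheme reintroduce precisely the same splitting-principle difficulty, so there is no real shortcut.)
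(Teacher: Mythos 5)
Your part~(1) and the formal reductions in part~(2) --- down to the single statement $(\star)$, the rank-one case $(\ell)$ via~\ref{lem:k0-cycle-length}, and the splitting-principle computation of $\gamma^n(\pi^\ast\zeta)$ as a sum of $n$-fold products of classes $\class{}{\cali{L}}-1$ --- are all sound and close in spirit to the paper, which likewise passes to a projective bundle to peel off rank-one pieces. The gap is exactly where you predicted it: the descent from $T$ back to $Z$. You invoke a filtered projective bundle formula $\Fil{n}{\rm top}{\mathbb{P}(\cali{V})}=\bigoplus_{i}\xi^i\,p^\ast\Fil{n-i}{\rm top}{Y}$ and claim it can be ``deduced from~\ref{thm:G0-localization},~\ref{lem:k0-cycle-length} and $(\ell)$ by induction on $n$.'' It cannot, at least not by those means. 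The inclusion $\supseteq$ is indeed easy (flatness plus $(\ell)$), but the inclusion $\subseteq$ is a strictness statement of Gersten type: already its base case $Y=\spec{k}$ asserts that every codimension-$n$ subvariety of $\mathbb{P}^r_k$ has class in $\bigoplus_{i\geq n}\Z\xi^i$, which is essentially the multiplicativity of the coniveau filtration on projective space and is classically proved with the moving lemma or the cycle class map --- precisely the kind of input this paper is structured to avoid; over a general noetherian base one would additionally need homotopy invariance of the coniveau filtration for the affine cells, which is nowhere near the quoted lemmas. So as written the argument for $(\dagger)$, $n\geq 2$, is not complete.

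The gap is repairable without new tools, and the repair is the paper's own device: descend with $\pi_\ast$ rather than with injectivity of $\pi^\ast$. If $\pi\colon T\to Z$ is an iterated projective bundle of total relative dimension $e$ and $[H]$ denotes (a product of) the classes $1-\class{}{\cali{O}(-1)}$ of the relative hyperplanes, then $\pi_\ast\bigl([H]^{e}\cdot\pi^\ast(u)\bigr)=u$ by the projection formula~\ref{thm:projection-formula} and $f_\ast(\lambda_{-1})=1$ from [SGA 6, Exp.~VI]. Your splitting computation puts $[H]^{e}\cdot\gamma^n(\pi^\ast\zeta)\cdot\class{}{\cali{O}_T}$ into $\Fil{n+e}{\rm top}{T}$ by $n+e$ applications of $(\ell)$, and Lemma~\ref{lem:top-filtration-functor}(2) then gives $\pi_\ast\Fil{n+e}{\rm top}{T}\subseteq\Fil{n}{\rm top}{Z}$: the codimension gained from $[H]^{e}$ exactly cancels the loss under $\pi_\ast$. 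This yields $(\dagger)$ and closes the argument. The paper implements the same idea slightly differently --- it keeps the coherent class $\class{}{\cali{E}}$ in play, writes $f^\ast\class{}{\cali{F}}-n=(\class{}{\cali{F}'}-n')+(\class{}{\cali{L}'}-1)$ on one projective bundle at a time, and inducts on the rank $n$ instead of fully splitting --- but the descent mechanism ($f_\ast$, projection formula, $[H]^{n-1}$, and the codimension estimate for proper pushforward) is the same, and it is the only descent the paper's lemmas support.
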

\begin{proof}
The first assertion follows from the second or alternatively note that if $\cali{E}$ (resp. $\cali{F}$) is a coherent (resp. locally free) $S-$module, then
\[
\supp{\cali{F}\otimes_S\cali{E}}=\supp{\cali{E}}
\]
and hence $K_0(S)\cdot \Fil{q}{\rm top}{S}\subseteq \Fil{q}{\rm top}{S}$. The second assertion is proved in [SGA 6, Exp. X, 1.3]. We give a slightly different proof. We need to show that for $\cali{E}$ and $\cali{F}$ as above with $\codim{\supp{\cali{E}}}{X}\geq q$ and $\epsilon (\cali{F})=n$ we have
\[
\gamma^p(\class{}{\cali{F}}-n)\cdot\class{}{\cali{E}}\in \Fil{q+p}{\rm top}{S}.
\]
For $p=0$ there is nothing to prove and for $p=1$ this follows directly from~\ref{lem:k0-cycle-length}. For the general case we use induction on $n$ to prove the above for all $p,q$, and $S$. The case $n=1$ follows from the case $p=1$. Let $f\colon S'\to S$ be the projective fiber of $\cali{F}$. There is a rank one module $\cali{L}'$ on $S'$ with
\[
f^\ast \class{}{\cali{F}}-n=(\class{}{\cali{F}'}-n')+(\class{}{\cali{L}'}-1).
\]
where $n'=n-1$. Use the case $p=1$ applied in $S'$ and obtain
\[
s':=[H]^{n'}\cdot f^\ast\class{}{\cali{E}}\in \Fil{q+n'}{\rm top}{S'}
\]
where $[H]=1-\class{}{\cali{O}_{S'}(-1)}\in K_0(S')$. By induction and the case $p=1$ we conclude that $s'':=\gamma^p(f^\ast\class{}{\cali{F}}-n)\cdot s'$ is in $\Fil{q+n'+p}{\rm top}{S'}$. It is enough to note that $\gamma^p(\class{}{\cali{F}}-n)\cdot\class{}{\cali{E}}=f_\ast (s'')$.
\end{proof}
There is a natural homomorphism
\[
\uptheta_X\colon K_0(S)\to K_0'(S),\quad s\mapsto s\cdot \class{}{\cali{O}_S}.
\]
\begin{cor}\label{gamma-subset-top}
For each $p\geq 0$ we have
\[\theta_X \Filt{p}{\gamma}(S)\subseteq \Filt{p}{\rm top}(S).\]
\end{cor}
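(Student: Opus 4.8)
The plan is to obtain this as an immediate consequence of Proposition~\ref{thm:gamma-top-filtration}(2), specialised to the case in which the topological-filtration index is $0$. By definition of the coniveau filtration we have $\Fil{0}{\rm top}{S}=K_0'(S)$, so in particular $\class{}{\cali{O}_S}\in\Fil{0}{\rm top}{S}$.

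Concretely, I would fix $s\in\Filt{p}{\gamma}(S)$ and unwind the definition of $\uptheta_X$: the element $\uptheta_X(s)$ is precisely the product $s\cdot\class{}{\cali{O}_S}$ formed via the $K_0(S)$-module structure on $K_0'(S)$ coming from~\eqref{eq:K0G0-pairing} (this module structure being the one through which $i$ and the $K^\circ(S)$-action are used, as noted after Proposition~\ref{thm:projection-formula}). Proposition~\ref{thm:gamma-top-filtration}(2) with $q=0$ then gives
\[
\uptheta_X(s)=s\cdot\class{}{\cali{O}_S}\in\Filt{p}{\gamma}(S)\cdot\Fil{0}{\rm top}{S}\subseteq\Fil{p}{\rm top}{S},
\]
and since $s$ was arbitrary this yields $\uptheta_X\Filt{p}{\gamma}(S)\subseteq\Filt{p}{\rm top}(S)$.

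I do not expect any genuine obstacle here: the corollary is a purely formal consequence of the filtered-module property already established, and the only point worth spelling out is the identification $\uptheta_X(s)=s\cdot\class{}{\cali{O}_S}$ with the module action appearing in Proposition~\ref{thm:gamma-top-filtration}, which is immediate from the definitions. If one prefers to avoid invoking~\eqref{eq:K0G0-pairing} directly, one can instead argue on the generators $\gamma^{i_1}(x_1)\cdot\dotsc\cdot\gamma^{i_p}(x_p)$ of $\Filt{p}{\gamma}(S)$ and reduce to the same statement.
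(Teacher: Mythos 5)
Your proof is correct and is exactly the paper's argument: the corollary is Proposition~\ref{thm:gamma-top-filtration}(2) with $q=0$, using $\class{}{\cali{O}_S}\in\Fil{0}{\rm top}{S}=K_0'(S)$ and the identification $\uptheta_X(s)=s\cdot\class{}{\cali{O}_S}$. Nothing further is needed.
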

\begin{proof}
This follows from~\ref{thm:gamma-top-filtration} with $q=0$.
\end{proof}

\begin{proposition}\label{pro:theta_X-iso}
Assume that $S$ is a regular noetherian separated scheme. The following assertions hold:
\begin{enumerate}[{\rm \qquad 1.}]
\item $\uptheta_S$ is an isomorphism.
\item $\Filt{j}{\rm top}(S)=\theta_S \Filt{j}{\gamma}(S)$ for $j=0,1$.
\end{enumerate}
\end{proposition}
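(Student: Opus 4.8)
The plan is to deduce both statements from one classical input --- the existence, on a regular noetherian separated scheme, of finite resolutions of coherent modules by locally free coherent modules --- and then to identify $\Fil{1}{\rm top}{S}$ with the kernel of the rank-at-the-generic-point homomorphism. For part~(1): being regular of finite dimension, $S$ has all local rings of bounded global dimension, and being moreover noetherian and separated it has the property that every coherent $\cali{O}_S$-module $\cali{F}$ fits in an exact sequence $0\to\cali{E}_m\to\cdots\to\cali{E}_0\to\cali{F}\to 0$ with all $\cali{E}_i$ locally free coherent; I would cite this (e.g.\ from [SGA 6, Exp.~I--II]). One then checks in the standard way (independence of the chosen resolution, and additivity on short exact sequences via the horseshoe lemma) that $\class{}{\cali{F}}\mapsto\sum_i(-1)^i\class{}{\cali{E}_i}$ is a well-defined homomorphism $\varphi\colon K_0'(S)\to K_0(S)$, and that $\varphi$ is a two-sided inverse of $\uptheta_S$: one has $\varphi\circ\uptheta_S=\mathrm{id}$ since a locally free module is its own (length-zero) resolution, and $\uptheta_S\circ\varphi=\mathrm{id}$ since the relation $\class{}{\cali{F}}=\sum_i(-1)^i\class{}{\cali{E}_i}$ already holds in $K_0'(S)$.

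For part~(2), the case $j=0$ is immediate from~(1), as $\Fil{0}{\gamma}{S}=K_0(S)$, $\Fil{0}{\rm top}{S}=K_0'(S)$ and $\uptheta_S$ is onto. For $j=1$ I would first observe that, $\Fil{1}{\gamma}{S}$ being defined through the augmentation $\epsilon$, the scheme $S$ is connected, hence integral ($S$ being regular); let $\eta$ denote its generic point, so that $\cali{O}_{S,\eta}=k(\eta)$ is the function field. By Example~\ref{ex:G0-localization-pullpack}, pullback along the canonical flat morphism $h\colon\spec{\cali{O}_{S,\eta}}\to S$ is the homomorphism $\epsilon'\colon K_0'(S)\to K_0'(k(\eta))=\Z$ given by $\class{}{\cali{F}}\mapsto\lgth{\cali{O}_{S,\eta}}{\cali{F}_\eta}=\dim_{k(\eta)}\cali{F}_\eta$.

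The heart of the matter is then the identity $\kernel(\epsilon')=\Fil{1}{\rm top}{S}$. To get $\Fil{1}{\rm top}{S}\subseteq\kernel(\epsilon')$ I would apply Lemma~\ref{lem:top-filtration-functor}(1) to the flat morphism $h$: then $h^\ast\Fil{1}{\rm top}{S}\subseteq\Fil{1}{\rm top}{\cali{O}_{S,\eta}}=0$, because $\dime{\cali{O}_{S,\eta}}=0$. For the reverse inclusion I would apply Proposition~\ref{lem:k0-cycle-length} with $q=0$ to an arbitrary coherent module $\cali{F}$; since $S$ is integral one has $S^{(0)}=\{\eta\}$ and $\overline{\{\eta\}}=S$, so its conclusion reads $\class{}{\cali{F}}-\epsilon'(\class{}{\cali{F}})\,\class{}{\cali{O}_S}\in\Fil{1}{\rm top}{S}$; by linearity $x-\epsilon'(x)\,\class{}{\cali{O}_S}\in\Fil{1}{\rm top}{S}$ for every $x\in K_0'(S)$, and taking $x\in\kernel(\epsilon')$ yields $x\in\Fil{1}{\rm top}{S}$. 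To conclude, note that $\epsilon'\circ\uptheta_S=\epsilon$ (for locally free $\cali{E}$ one has $\dim_{k(\eta)}\cali{E}_\eta={\rm rk}(\cali{E})$); since $\uptheta_S$ is an isomorphism by~(1), it therefore carries $\Fil{1}{\gamma}{S}=\kernel(\epsilon)$ onto $\kernel(\epsilon')=\Fil{1}{\rm top}{S}$.

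I expect the only genuine obstacle to lie in part~(1): identifying and correctly citing the hypotheses that guarantee a \emph{global} finite locally free resolution of every coherent module on a regular noetherian separated scheme. Granting that input, the inverse-by-Euler-characteristic argument is routine, and part~(2) follows quickly from Lemmas~\ref{lem:top-filtration-functor} and~\ref{lem:k0-cycle-length} already at hand, the only mild point being the harmless reduction to the integral case so that $S^{(0)}=\{\eta\}$ and $\overline{\{\eta\}}=S$.
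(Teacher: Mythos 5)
Your proposal is correct. For part (1) you and the paper do the same thing: the paper simply cites [SGA 6, Exp.~IV] (via the fact that a regular noetherian separated scheme has an ample family of invertible modules, hence the resolution property), and your Euler-characteristic inverse is exactly the content of that citation; the only real input is the global finite locally free resolution, which you correctly flag as the point to be cited rather than proved. For part (2) with $j=1$ your route is genuinely different in its details. The paper takes a coherent $\cali{E}$ with $\codim{\supp{\cali{E}}}{S}\geq 1$, chooses a locally free complex $\cali{L}^\bullet$ quasi-isomorphic to it, and evaluates the augmentation by restricting to a point $s\notin\supp{\cali{E}}$, concluding $\class{}{\cali{L}^\bullet}\in\kernel(\epsilon)=\Fil{1}{\gamma}{S}$; the reverse inclusion $\theta_S\Fil{1}{\gamma}{S}\subseteq\Fil{1}{\rm top}{S}$ is already Corollary~\ref{gamma-subset-top}. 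You instead prove the cleaner intermediate identity $\Fil{1}{\rm top}{S}=\kernel(\epsilon')$, where $\epsilon'$ is the rank at the generic point: one inclusion from Lemma~\ref{lem:top-filtration-functor}(1) applied to $h\colon\spec{k(\eta)}\to S$ together with $\Fil{1}{\rm top}{k(\eta)}=0$, the other from Proposition~\ref{lem:k0-cycle-length} with $q=0$ and linearity, and then transport along $\epsilon'\circ\uptheta_S=\epsilon$. Both arguments are valid; yours avoids choosing a resolution of $\cali{E}$, reuses the d\'evissage statement already proved, and yields the useful characterization of $\Fil{1}{\rm top}{S}$ as the kernel of the generic rank, while the paper's is marginally shorter because it delegates one inclusion to Corollary~\ref{gamma-subset-top}. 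Your reduction to connected (hence, by regularity, integral) $S$ so that $S^{(0)}=\{\eta\}$ and $\overline{\{\eta\}}=S$ is the same harmless normalization the paper makes.
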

\begin{proof}
The first assertion can be found in [SGA 6, Exp. IV] (and follows from the fact that $S$ has an ample family of invertible modules in the sense of [SGA 6, II, 2.2.4]). For the second note that $j=0$ is already the surjectivity of $\uptheta_S$. For $j=1$, let $\cali{E}$ be a coherent module the codimension of whose support is $\geq 1$. Assume that $S$ is connected. There is a complex $\cali{L}^\bullet$ of locally free modules and a quasi-isomorphism $\xi\colon \cali{E}\to \cali{L}^\bullet$. Let $s\in S\setminus \supp{\cali{E}}$. It follows that $\cali{L}_s^\bullet$ is quasi-isomorphic to zero. By~\ref{lem:K0-lambda-functor} the map $K_0(S)\to K_0(\cali{O}_{S,s})$ commutes with the augmentation and hence $\epsilon (\class{}{\cali{E}})=\epsilon (0)=0$. Therefore $\class{}{\cali{E}}=\class{}{\cali{L}^\bullet}\in {\rm ker}(\epsilon)$. In this case the class of $\cali{L}^\bullet$ defines an element $l\in \Fil{1}{\gamma}{S}$. By definition $\uptheta_S (l)=\class{}{\cali{E}}$.
\end{proof}
\begin{cor}\label{thm2:fd+1=0}
If $S$ is a regular noetherian separated scheme, then $\Fil{q}{\gamma}{S}=0$ for all $q>\dime{S}$.
\end{cor}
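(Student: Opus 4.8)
The plan is to read this off the results already in hand: $\uptheta_S$ is an isomorphism (Proposition~\ref{pro:theta_X-iso}), it carries $\Fil{q}{\gamma}{S}$ into $\Fil{q}{\rm top}{S}$ (Corollary~\ref{gamma-subset-top}), and $\Fil{q}{\rm top}{S}=0$ as soon as $q>\dime{S}$ (noted right after the definition of the coniveau filtration). Combining these three facts gives the statement almost immediately; in particular only part~(1) of Proposition~\ref{pro:theta_X-iso} is needed, not the more delicate part~(2).

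First I would reduce to the case where $S$ is connected --- which is also the case in which the $\gamma$-filtration, defined through the augmentation $\epsilon$, is actually available. Since $S$ is noetherian it has finitely many connected components $S_1,\dots,S_r$; the decomposition $K_0(S)\cong\prod_i K_0(S_i)$ is compatible with the $\lambda$-operations (each ${\rm Alt}^n$ is formed componentwise) and hence with the $\gamma$-filtrations, so that $\Fil{q}{\gamma}{S}\cong\prod_i\Fil{q}{\gamma}{S_i}$; moreover $\dime{S}=\max_i\dime{S_i}$, so $q>\dime{S}$ forces $q>\dime{S_i}$ for every $i$. Thus it is enough to prove the assertion for each connected $S_i$.

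So assume $S$ connected and $q>\dime{S}$. By Corollary~\ref{gamma-subset-top}, $\uptheta_S\Fil{q}{\gamma}{S}\subseteq\Fil{q}{\rm top}{S}$, and the right-hand group is $0$ because $q>\dime{S}$; hence $\uptheta_S\Fil{q}{\gamma}{S}=0$. Since $\uptheta_S$ is an isomorphism, in particular injective, by Proposition~\ref{pro:theta_X-iso}(1) we conclude $\Fil{q}{\gamma}{S}=0$. There is no real obstacle in this argument; the only step demanding a moment's care is the passage to connected components, and that is routine.
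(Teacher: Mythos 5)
Your argument is correct and is essentially the paper's own proof: combine the injectivity of $\uptheta_S$ from Proposition~\ref{pro:theta_X-iso}, the inclusion $\uptheta_S\Fil{q}{\gamma}{S}\subseteq\Fil{q}{\rm top}{S}$ from Corollary~\ref{gamma-subset-top}, and the vanishing of $\Fil{q}{\rm top}{S}$ for $q>\dime{S}$. The preliminary reduction to connected components is a harmless extra remark not present in the paper's proof.
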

\begin{proof}
Note that by~\ref{pro:theta_X-iso} the map $\theta_S\colon K_0(S)\to K_0'(S)$ is an isomorphism. Using~\ref{gamma-subset-top} the result follows from the fact that by definition $\Fil{q}{\rm top}{S}$ vanishes for all $q>\dime{S}$. 
\end{proof}

\section{A multiplicativity result}
Let $S$ be a regular noetherian separated scheme. We may use the result~\ref{pro:theta_X-iso} to define a ring structure on $K_0'(S)$. Equivalently, the underlying abelian group of the ring $K_0(S)$ has a (topological) filtration $\Fil{q}{\rm top}{S}:=\uptheta_S^{-1}\Fil{q}{\rm top}{S}$. As mentioned in the introduction, the question of multiplicativity of coniveau (i.e. topological) filtration on $K_0(S)$ asks if
\[
\Filt{p}{\rm top}\cdot \Filt{q}{\rm top}\subseteq \Filt{p+q}{\rm top}.
\]
\begin{proposition}\label{pro:multiplicativity-top-fil}
Let the notations be as above. 
\begin{enumerate}[{\rm (i)}]
 \item Assume that $\dime{S}<\infty$. The following assertions are equivalent.
\begin{enumerate}[{\qquad \rm 1.}]  
\item $\Filt{p}{\rm top}(S)\cdot \Filt{q}{\rm top}(S)\subseteq \Filt{q+p}{\rm top}(S)$ for all $p,q\in \Z$.
\item For any integral closed subscheme $Z\hookrightarrow S$ of codimension $\geq p$ and any integer $q\in\Z$ we have
 \[
  \class{}{\cali{O}_Z}\cdot \Fil{q}{\rm top}{S}\subseteq \Fil{q+p}{\rm top}{S}.
 \] 
\item For any integral closed subschemes $Z\hookrightarrow S$ and $Y\hookrightarrow S$ of respective codimensions $\geq p$ and $\geq q$ we have
 \[
  \class{}{\cali{O}_Z}\cdot \class{}{\cali{O}_Y}\in \Fil{q+p}{\rm top}{S}.
 \] 
\end{enumerate}
\item $\Filt{1}{\rm top}(S)\cdot \Filt{q}{\rm top}(S)\subseteq \Filt{q+p}{\rm top}(S)$ for all $q\in \Z$.
\end{enumerate}
\end{proposition}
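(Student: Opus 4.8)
The plan is to reduce part~(i) to a single structural fact about the topological filtration and to derive part~(ii) directly from~\ref{pro:theta_X-iso} and~\ref{thm:gamma-top-filtration}. Throughout we identify $K_0(S)$ with $K_0'(S)$ by means of the isomorphism $\uptheta_S$ of~\ref{pro:theta_X-iso} --- a ring isomorphism carrying the topological filtration of $K_0(S)$ onto that of $K_0'(S)$ --- and compute all products below in this ring.

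The structural fact I would establish first is the following: \emph{for every $q$, the group $\Fil{q}{\rm top}{S}$ is generated, as an abelian group, by the classes $\class{}{\cali{O}_Z}$ attached to integral closed subschemes $Z\subseteq S$ of codimension $\geq q$.} This follows from~\ref{lem:k0-cycle-length} by descending induction on $q$, starting from $\Fil{d+1}{\rm top}{S}=0$ with $d=\dime{S}<\infty$ --- the one place where the finite-dimensionality hypothesis enters. Indeed, any generator $\class{}{\cali{E}}$ of $\Fil{q}{\rm top}{S}$ has $r:=\codim{\supp{\cali{E}}}{S}\geq q$; if $r>q$ then $\class{}{\cali{E}}\in\Fil{q+1}{\rm top}{S}$ and the inductive hypothesis applies, whereas if $r=q$ then~\ref{lem:k0-cycle-length} gives
\[
\class{}{\cali{E}}-\sum_{x\in S^{(q)}}{\rm lg}_{\cali{O}_{S,x}}(\cali{E}_x)\,\class{}{\cali{O}_{\overline{\{x\}}}}\in\Fil{q+1}{\rm top}{S},
\]
where each $\overline{\{x\}}$ is integral of codimension $q$ and the difference, lying in $\Fil{q+1}{\rm top}{S}$, is handled by the inductive hypothesis.

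Granting this, I would prove part~(i) by running the cycle $1\Rightarrow 2\Rightarrow 3\Rightarrow 1$. The implications $1\Rightarrow 2$ and $2\Rightarrow 3$ are immediate from the definitions: for an integral $Z$ of codimension $\geq p$ we have $\supp{\cali{O}_Z}=Z$, so $\class{}{\cali{O}_Z}\in\Fil{p}{\rm top}{S}$, and then (1) gives $\class{}{\cali{O}_Z}\cdot\Fil{q}{\rm top}{S}\subseteq\Fil{p+q}{\rm top}{S}$, which is (2); specializing the second factor to $\class{}{\cali{O}_Y}\in\Fil{q}{\rm top}{S}$ for $Y$ integral of codimension $\geq q$ yields (3). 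For $3\Rightarrow 1$, take $x\in\Fil{p}{\rm top}{S}$ and $y\in\Fil{q}{\rm top}{S}$; by the structural fact we may write $x=\sum_i a_i\class{}{\cali{O}_{Z_i}}$ and $y=\sum_j b_j\class{}{\cali{O}_{Y_j}}$ with each $Z_i$ (resp.\ $Y_j$) integral of codimension $\geq p$ (resp.\ $\geq q$), and then $x\cdot y=\sum_{i,j}a_ib_j\,\class{}{\cali{O}_{Z_i}}\cdot\class{}{\cali{O}_{Y_j}}\in\Fil{p+q}{\rm top}{S}$ by (3). Finally, part~(ii) is the special case $p=1$ of~\ref{thm:gamma-top-filtration}(2), once one observes that by~\ref{pro:theta_X-iso}(2) the degree-one piece of the topological filtration on $K_0(S)$ coincides with $\Fil{1}{\gamma}{S}$.

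The argument is soft once the structural generation fact is available, and that fact is little more than bookkeeping on top of the d\'evissage already recorded in~\ref{lem:k0-cycle-length}, so I do not anticipate a genuine obstacle. The one point that must be handled with care --- and the most likely source of a slip --- is keeping the identification $\uptheta_S$ and the resulting ring and $K_0(S)$-module structures consistent, so that the products in conditions (2) and (3) are really computed in the ring $K_0'(S)\cong K_0(S)$ and not, through the pairing~\eqref{eq:K0G0-pairing}, with a different bracketing; committing to that convention at the outset, as above, renders the rest routine.
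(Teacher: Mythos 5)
Your proposal is correct and follows essentially the same route as the paper: the implications $1\Rightarrow 2\Rightarrow 3$ are formal, part (ii) is the identification $\Fil{1}{\rm top}{S}=\Fil{1}{\gamma}{S}$ from~\ref{pro:theta_X-iso} combined with~\ref{thm:gamma-top-filtration}, and $3\Rightarrow 1$ rests on d\'evissage via~\ref{lem:k0-cycle-length} together with a descending induction that terminates because $\Fil{d+1}{\rm top}{S}=0$. The only (cosmetic, and arguably clarifying) difference is that you isolate the reduction as an explicit generation statement for $\Fil{q}{\rm top}{S}$ by classes of integral subschemes, whereas the paper runs the equivalent decreasing induction on $p+q$ inline.
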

\begin{proof} (i): ${\rm 1}\Rightarrow{\rm 2}:$ We just need to note that $\class{}{\cali{O}_Z}\in \Fil{p}{\rm top}{S}$. ${\rm 2}\Rightarrow{\rm 3}:$ This is evident. ${\rm 3}\Rightarrow{\rm 1}:$ Using~\ref{lem:k0-cycle-length} and a decreasing induction on $p+q$, it is enough to show that $a\cdot b\in \Filt{p+q}{\rm top}$ where $a=\class{}{\cali{O}_Y}$ (resp. $b=\class{}{\cali{O}_Z}$) for an integral closed subscheme $Y$ (resp. $Z$) of codimension $p$ (resp. $q$). But this is exactly our assumption. (ii): Note that $\Filt{1}{\rm top}(S)=\Filt{1}{\gamma}(S)$ by~\ref{pro:theta_X-iso} and hence the result follows from~\ref{thm:gamma-top-filtration}.
\end{proof}
\begin{lemma}\label{lem:regular-codim}
Let $Y\hookrightarrow X$ be a regular closed immersion of noetherian schemes and $Z\subseteq Y$ a closed subscheme. If $Y$ is pure codimensional, then
\[
\codim{Z}{X}=\codim{Z}{Y}+\codim{Y}{X}.
\]
\end{lemma}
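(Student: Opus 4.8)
The plan is to reduce the assertion to the behaviour of dimensions of local rings under cutting by a regular sequence. Recall that for a closed subscheme $W\hookrightarrow X$ one has $\codim{W}{X}=\inf_{w}\dim\cali{O}_{X,w}$, the infimum ranging over the generic points $w$ of the irreducible components of $W$, and likewise with $Y$ in place of $X$. Since the irreducible components of $Z$ and their generic points are intrinsic to the topological space underlying $Z$ --- they do not see the ambient scheme --- it suffices to prove that for every point $z\in Y$ one has
\[
\dim\cali{O}_{X,z}\;=\;\dim\cali{O}_{Y,z}+c,\qquad c:=\codim{Y}{X};
\]
taking the infimum over the generic points of the components of $Z$ and pulling the constant $c$ out of the infimum then gives $\codim{Z}{X}=\codim{Z}{Y}+\codim{Y}{X}$.

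For the displayed identity I would first record the elementary fact that if $A$ is a noetherian local ring and $f\in\mathfrak{m}_A$ is a nonzerodivisor, then $\dim(A/fA)=\dim A-1$: the inequality $\le$ holds because a nonzerodivisor of a noetherian ring lies in no minimal prime, and the inequality $\ge$ is part of Krull's dimension theory. Iterating, $\dim\bigl(A/(f_1,\dots,f_r)\bigr)=\dim A-r$ whenever $f_1,\dots,f_r\in\mathfrak{m}_A$ is a regular sequence. Now, since $Y\hookrightarrow X$ is a regular immersion, every $z\in Y$ has an open neighbourhood in $X$ on which the ideal of $Y$ is generated by a regular sequence; localising at $z$ gives $\cali{O}_{Y,z}=\cali{O}_{X,z}/(f_1,\dots,f_{r(z)})$ for a regular sequence of length $r(z)$ lying in $\mathfrak{m}_{X,z}$, where $r(z)$ denotes the codimension of the regular immersion at $z$; hence $\dim\cali{O}_{Y,z}=\dim\cali{O}_{X,z}-r(z)$ by the previous observation.

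It remains to identify $r(z)$ with $c$ for every $z\in Y$, and this is exactly where the hypothesis that $Y$ is pure codimensional enters. The function $z\mapsto r(z)$ is locally constant on $Y$ --- for a regular immersion the conormal sheaf $\cali{I}/\cali{I}^2$ is locally free, of rank $r(z)$ at $z$ --- hence constant on each connected component of $Y$. Applying the displayed identity at the generic point $\eta_i$ of an irreducible component $Y_i$ of $Y$, where $\dim\cali{O}_{Y,\eta_i}=0$, yields $r(\eta_i)=\dim\cali{O}_{X,\eta_i}=\codim{Y_i}{X}$; purity forces this common value to be $c$ for every $i$, and since each $z\in Y$ lies in some $Y_i$, which is irreducible (hence connected) and contains $\eta_i$, we conclude $r(z)=r(\eta_i)=c$. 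This final point --- that purity forces the regular immersion to have the same codimension $c$ at every point --- is the only genuinely non-formal ingredient, resting on the local constancy of the codimension of a regular immersion; everything else is a chase through the definition of codimension together with the single observation that cutting a noetherian local ring by a nonzerodivisor drops its dimension by exactly one.
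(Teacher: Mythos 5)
Your proof is correct. There is nothing in the paper to compare it against: the paper's ``proof'' of this lemma reads ``We consider this as well-known,'' so your write-up supplies an argument where the source gives none. The structure is right: reduce to the pointwise identity $\dim\cali{O}_{X,z}=\dim\cali{O}_{Y,z}+c$ at the generic points of the components of $Z$ (legitimate, since those points and $\codim{Z}{Y}$, $\codim{Z}{X}$ are all read off from local rings at the same set of points); establish $\dim\bigl(A/(f_1,\dots,f_r)\bigr)=\dim A-r$ for a regular sequence in $\mathfrak{m}_A$, with both inequalities correctly sourced (a nonzerodivisor of a noetherian ring lies in no minimal prime for ``$\le$'', Krull for ``$\ge$''); and, crucially, identify where purity is actually used --- the locally constant rank of the conormal module pins $r(z)$ to the codimension of the component of $Y$ through $z$, and purity makes that common value $c$. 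Two small points would deserve a sentence in a fully written version: the degenerate case $Z=\emptyset$, where both sides are $+\infty$ by convention, and the fact that the regular sequence cutting out $Y$ on a neighbourhood does localize to a regular sequence in $\cali{O}_{X,z}$ for $z\in Y$ (injectivity survives localization and the quotient is nonzero at such $z$). Neither affects the validity of the argument.
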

\begin{proof}
We consider this as well-known.
\end{proof}
\begin{proposition}\label{thm:cod-1-inverse-image-top-filt}
Let $i\colon Z\hookrightarrow S$ be a regular closed immersion of pure codimension $1$ of regular noetherian separated schemes. Then
\[
i^\ast (\Fil{q}{\rm top}{S})\subseteq \Fil{q}{\rm top}{Z}
\]
for each $q\geq 0$.
\end{proposition}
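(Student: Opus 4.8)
The plan is to push the statement into the Grothendieck groups $K_0'$, where the coniveau filtration is defined, and then, for a coherent sheaf, to separate the part of its class supported on $Z$ from the part transverse to $Z$. First I would record the reduction: since $S$ and $Z$ are regular noetherian separated, $\uptheta_S$ and $\uptheta_Z$ are isomorphisms (\ref{pro:theta_X-iso}), and a regular closed immersion is of finite Tor-dimension, so there is a pull-back $i^\ast\colon K_0'(S)\to K_0'(Z)$ with $i^\ast\class{}{\cali{E}}=\sum_j(-1)^j\class{}{\tor{\cali{O}_S}{j}{\cali{E}}{\cali{O}_Z}}$; as $Z$ is locally cut out in $S$ by a single nonzerodivisor, $\tor{\cali{O}_S}{j}{\cali{E}}{\cali{O}_Z}=0$ for $j\geq 2$. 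Since pull-back commutes with tensor products, the square formed by $\uptheta_S$, $\uptheta_Z$ and the two pull-backs commutes on classes of locally free sheaves, hence everywhere; as the topological filtration on $K_0$ is by definition $\uptheta^{-1}$ of the one on $K_0'$, it suffices to prove $i^\ast\Fil{q}{\rm top}{S}\subseteq\Fil{q}{\rm top}{Z}$ for the $K_0'$-filtrations. So I fix a coherent $\cali{O}_S$-module $\cali{E}$ with $\codim{\supp{\cali{E}}}{S}\geq q$ and must show $i^\ast\class{}{\cali{E}}\in\Fil{q}{\rm top}{Z}$.

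Next I would write $0\to\cali{E}'\to\cali{E}\to\cali{E}''\to 0$, with $\cali{E}'$ the coherent subsheaf of sections killed by a power of the ideal $\cali{I}$ of $Z$, so $i^\ast\class{}{\cali{E}}=i^\ast\class{}{\cali{E}'}+i^\ast\class{}{\cali{E}''}$, and treat the two summands separately. For $\cali{E}''$: by construction $\cali{E}''$ has no associated point on $Z$, hence no irreducible component of $\supp{\cali{E}''}$ is contained in $Z$, so $\supp{\cali{E}''}\cap Z$ has codimension $\geq q+1$ in $S$ and therefore, by~\ref{lem:regular-codim} (applied to the regularly embedded pure-codimension-one subscheme $Z$), codimension $\geq q$ in $Z$. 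Moreover, locally a generator $f$ of $\cali{I}$ acts as a nonzerodivisor on $\cali{E}''$, so $\tor{\cali{O}_S}{1}{\cali{E}''}{\cali{O}_Z}=0$ and $i^\ast\class{}{\cali{E}''}=\class{}{\cali{E}''\otimes_{\cali{O}_S}\cali{O}_Z}$, a module supported on $\supp{\cali{E}''}\cap Z$. Hence $i^\ast\class{}{\cali{E}''}\in\Fil{q}{\rm top}{Z}$.

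For the part $\cali{E}'$ supported on $Z$ I would bring in the conormal bundle. Here $\codim{\supp{\cali{E}'}}{Z}=\codim{\supp{\cali{E}'}}{S}-1\geq q-1$ by~\ref{lem:regular-codim}, and the finite $\cali{I}$-adic filtration of $\cali{E}'$ has graded pieces $\cali{G}_k=\cali{I}^k\cali{E}'/\cali{I}^{k+1}\cali{E}'$ which are coherent $\cali{O}_Z$-modules, so $\class{}{\cali{E}'}=\sum_k\class{}{\cali{G}_k}$ and, regarded on $Z$, $\sum_k\class{}{\cali{G}_k}\in\Fil{q-1}{\rm top}{Z}$. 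For any coherent $\cali{O}_Z$-module $\cali{G}$ the Koszul resolution of $\cali{O}_Z$ over $\cali{O}_S$ gives $\tor{\cali{O}_S}{0}{\cali{G}}{\cali{O}_Z}=\cali{G}$ and $\tor{\cali{O}_S}{1}{\cali{G}}{\cali{O}_Z}=\cali{G}\otimes_{\cali{O}_Z}\cali{L}$, where $\cali{L}=\cali{I}/\cali{I}^2$ is the conormal line bundle; hence $i^\ast\class{}{\cali{G}}=(1-\class{}{\cali{L}})\cdot\class{}{\cali{G}}$ in $K_0'(Z)$. Summing over $k$,
\[
i^\ast\class{}{\cali{E}'}=(1-\class{}{\cali{L}})\cdot\sum_k\class{}{\cali{G}_k}\in\Filt{1}{\gamma}(Z)\cdot\Fil{q-1}{\rm top}{Z}\subseteq\Fil{q}{\rm top}{Z},
\]
since $1-\class{}{\cali{L}}\in\Filt{1}{\gamma}(Z)=\kernel(\epsilon)$ because $\epsilon(\cali{L})=1$, and the last inclusion is~\ref{thm:gamma-top-filtration}. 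Adding the two contributions gives $i^\ast\class{}{\cali{E}}\in\Fil{q}{\rm top}{Z}$.

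The delicate point is the torsion part $\cali{E}'$: a crude support estimate only places $i^\ast\class{}{\cali{E}'}$ in $\Fil{q-1}{\rm top}{Z}$, one step too coarse, and the missing codimension is recovered precisely from the conormal twist $1-\class{}{\cali{L}}$, which lies at $\gamma$-filtration level $1$, so that the multiplicativity of the $\gamma$-filtration against the coniveau filtration (\ref{thm:gamma-top-filtration}) supplies the lost step. The remaining ingredients — compatibility of pull-back with the isomorphisms $\uptheta$, the vanishing of $\tor{\cali{O}_S}{j}{\cali{E}}{\cali{O}_Z}$ for $j\geq 2$, and the Koszul computation of $\tor{\cali{O}_S}{1}{\cali{G}}{\cali{O}_Z}$ — are routine.
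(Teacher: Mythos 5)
Your proof is correct, and it takes a genuinely different route from the paper's. The paper applies~\ref{lem:k0-cycle-length} to the cohomology of $Li^\ast\cali{E}$, identifies the coefficient at each $z\in Z^{(q-1)}$ as a Serre intersection multiplicity $\chi^A_{\mathfrak p}(M,N)$ with $N=\cali{O}_Z$, and kills it by the vanishing theorem of Gillet--Soul\'e and Roberts because the intersection is not proper. You instead split $\cali{E}$ into its $\cali{I}$-torsion $\cali{E}'$ and a quotient $\cali{E}''$ with no associated point on $Z$: the transverse part needs only a support estimate (no higher Tor, and $\supp{\cali{E}''}\cap Z$ already has codimension $\geq q$ in $Z$ because no component of $\supp{\cali{E}''}$ lies in $Z$), while the excess part is handled by the self-intersection formula $i^\ast i_\ast\class{}{\cali{G}}=(1-\class{}{\cali{L}})\cdot\class{}{\cali{G}}$ together with~\ref{thm:gamma-top-filtration}, the conormal twist $1-\class{}{\cali{L}}\in\Filt{1}{\gamma}$ supplying exactly the missing codimension step. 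Your d\'evissage is elementary and removes the citation to the vanishing conjecture entirely; it is worth observing that in the paper's proof the module $M_{\mathfrak p}$ in fact has finite length (its support has codimension $\geq q$ in the $q$-dimensional ring $A_{\mathfrak p}$) and $N_{\mathfrak p}=A_{\mathfrak p}/(f)$, so the vanishing invoked there also reduces to the elementary fact that multiplication by $f$ on a finite-length module has kernel and cokernel of equal length --- neither argument truly needs the deep theorem in codimension one. What the paper's method buys is a template that would extend to regular immersions of higher codimension given the full vanishing theorem and the analogous codimension count; your argument leans on the principality of $\cali{I}$ (the two-term Koszul resolution and the torsion splitting) and is therefore specific to divisors, which is all the proposition asserts. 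The only cosmetic point is the case $q=0$, where $\Fil{q-1}{\rm top}{Z}$ is not defined; but there the statement is vacuous since $\Fil{0}{\rm top}{Z}=K_0'(Z)$.
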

\begin{proof}
Fix $q\geq 0$. Let $\cali{E}$ be a coherent $S-$module with $\codim{\supp{\cali{E}}}{S}\geq q$. We show $i^\ast\class{}{\cali{E}}\in \Filt{q+p}{\rm top}$. Using~\ref{lem:regular-codim}, it follows that
\begin{align*}
 \codim{\supp{i^\ast\cali{E}}}{Z} & =\codim{\supp{\cali{E}}\cap Z}{S}-1\geq q-1.
\end{align*}
We may assume that the equality holds. Using~\ref{lem:k0-cycle-length} for all cohomology modules of the complex $Li^\ast (\cali{E})$, we obtain
\[
\class{}{Li^\ast\cali{E}}-\sum_{z\in Z^{(q-1)}}{\rm lg}_{\cali{O}_{Z,z}}(Li^\ast(\cali{E})_z)\class{}{\cali{O}_{\overline{\{z\}}}}\in \Filt{q}{\rm top}.
\]
Let $W=\spec{A}$ be an affine open subscheme of $S$ containing $z\in Z^{(q-1)}$. Let $N$ and $M$ be the $A-$modules corresponding to (the restriction to $W$ of) $i_\ast\cali{O}_Z$ and $\cali{E}$ respectively. Let the prime idea ${\mathfrak p}\subseteq A$ be corresponding to $z$. We note that ${\rm lg}_{\cali{O}_{Z,z}}(Li^\ast(\cali{E})_z)=\chi^A_{\mathfrak p}(M,N)$ where
\[
\chi^A_{\mathfrak p}(M,N)=\sum_{n\geq 0} (-1)^n{\rm lg}_{A_{\mathfrak p}}\bigl(\tor{A_{\mathfrak p}}{n}{M_{\mathfrak p}}{N_{\mathfrak p}} \bigr).
\]
For codimensions in $\spec{A_{\mathfrak p}}$ we have
\begin{align*}
\codim{\supp{M_{\mathfrak p}\otimes N_{\mathfrak p}}}{A_{\mathfrak p}} & = \dime{A_{\mathfrak p}}\\ & =q \\ & <q+1\\
& \leq \codim{M_{\mathfrak p}}{A_{\mathfrak p}}+\codim{N_{\mathfrak p}}{A_{\mathfrak p}}
\end{align*}
where in the last inequality we have used~\ref{ex:G0-localization-pullpack}. Therefore by the vanishing theorem (part of Serre's multiplicity conjectures) proved in~\cite[5.6]{gillet-soule-1987} and~\cite{roberts-1987} we obtain $\chi^A_{\mathfrak p}(M,N)=0$. This implies that $i^\ast \class{}{\cali{E}}=\class{}{Li^\ast\cali{E}}\in \Filt{q}{\rm top}$.	
\end{proof}
\begin{theorem}\label{thm:multiplicativity-regular}
If $Y$ and $Z$ are regularly embedded closed subschemes of respective pure codimensions $p$ and $q$ in a regular noetherian separated scheme $S$, then
\[
\class{}{\cali{O}_Z}\cdot \class{}{\cali{O}_Y}\in \Fil{p+q}{\rm top}{S}.
\]
\end{theorem}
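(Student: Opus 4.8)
The plan is to transport the product to $K_0'(S)$ by means of the isomorphism $\uptheta_S$ of~\ref{pro:theta_X-iso}, realize it as a derived tensor product, and then annihilate its ``excess'' cycle classes one codimension at a time, following the pattern of the proof of~\ref{thm:cod-1-inverse-image-top-filt}. First, since $Y$ and $Z$ are regularly embedded, the pushforwards of $\cali{O}_Y$ and $\cali{O}_Z$ to $S$ are perfect complexes (Koszul resolutions), so under $K_0(S)\simeq K^\circ(S)\xrightarrow{\uptheta_S}K_0'(S)$ the element $\class{}{\cali{O}_Z}\cdot\class{}{\cali{O}_Y}$ is the class of $\cali{O}_Y\otimes^{\mathbbm L}_{\cali{O}_S}\cali{O}_Z$, that is the $\Z$-combination $\sum_n(-1)^n\class{}{\tor{\cali{O}_S}{n}{\cali{O}_Y}{\cali{O}_Z}}$ of classes of coherent modules supported on $Y\cap Z$. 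By Krull's height theorem, $Y\cap Z$ has codimension at most $p$ in $Z$ (locally $Y$, hence $Y\cap Z$ inside $Z$, is cut out by $p$ equations), so~\ref{lem:regular-codim} applied to $Z\hookrightarrow S$ gives $\codim{Y\cap Z}{S}\le p+q$; equality is exactly the case in which the class visibly already lies in $\Fil{p+q}{\rm top}{S}$. In general, as $Y\cap Z$ is contained in both $Y$ and $Z$, the class lies at least in $\Fil{\max(p,q)}{\rm top}{S}$.

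I would then raise the filtration index step by step. Suppose we already know $\class{}{\cali{O}_Y\otimes^{\mathbbm L}_{\cali{O}_S}\cali{O}_Z}\in\Fil{c}{\rm top}{S}$ for some $c$ with $\max(p,q)\le c<p+q$, and run the d\'evissage of~\ref{lem:k0-cycle-length} on the cohomology modules of a representative with support of codimension $\ge c$. Modulo $\Fil{c+1}{\rm top}{S}$ this produces the cycle $\sum_{w\in S^{(c)}}\chi_w\,\class{}{\cali{O}_{\overline{\{w\}}}}$, where for a maximal point $w$ of a codimension-$c$ component of $Y\cap Z$ the complex $(\cali{O}_Y)_w\otimes^{\mathbbm L}_{\cali{O}_{S,w}}(\cali{O}_Z)_w$ has finite-length cohomology and $\chi_w=\sum_n(-1)^n\lgth{\cali{O}_{S,w}}{\tor{\cali{O}_{S,w}}{n}{(\cali{O}_Y)_w}{(\cali{O}_Z)_w}}$ is the local intersection multiplicity. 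Now $\dime{\cali{O}_{S,w}}=c<p+q$, whereas~\eqref{eq:G0-localization-pullpack} gives $\codim{(\cali{O}_Y)_w}{\cali{O}_{S,w}}\ge\codim{Y}{S}=p$ and $\codim{(\cali{O}_Z)_w}{\cali{O}_{S,w}}\ge q$; as $S$ is regular, $\cali{O}_{S,w}$ is a regular local ring, and the vanishing theorem of~\cite[5.6]{gillet-soule-1987} and~\cite{roberts-1987} forces $\chi_w=0$. Hence the cycle vanishes, $\class{}{\cali{O}_Y\otimes^{\mathbbm L}_{\cali{O}_S}\cali{O}_Z}\in\Fil{c+1}{\rm top}{S}$, and iterating from $c=\max(p,q)$ to $c=p+q-1$ yields the theorem. (One could instead apply the projection formula to $i\colon Z\hookrightarrow S$, reducing the statement to $\class{}{Li^\ast\cali{O}_Y}\in\Fil{p}{\rm top}{Z}$ --- the analog of~\ref{thm:cod-1-inverse-image-top-filt} for regular immersions of arbitrary codimension --- and then push forward via~\ref{lem:regular-codim}; the core of the argument is the same.)

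The main difficulty is precisely this iteration. In~\ref{thm:cod-1-inverse-image-top-filt} a single d\'evissage sufficed, whereas here one must pass through all of $\max(p,q),\dots,p+q-1$, and at each level one has to know that the extracted cycle is still governed by the local Euler characteristics $\chi_w$, so that the vanishing theorem continues to apply. The point that makes this legitimate is that a maximal point $w$ of a codimension-$c$ component of $Y\cap Z$ cannot lie on any component of smaller codimension (otherwise it would fail to be maximal), so after localizing at $w$ the intersection $(Y\cap Z)\cap\spec{\cali{O}_{S,w}}$ collapses to the closed point, and the contributions of the lower-codimension components --- already killed at earlier stages, their generic multiplicities being the vanishing $\chi$'s --- no longer interfere. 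Making this compatibility of d\'evissage with localization precise in the multi-step setting is where the real work lies; it is the substance of the analogous computations in [SGA 6, Exp. X].
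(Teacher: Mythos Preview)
Your strategy is natural and, were the iteration sound, would actually prove more than the theorem (the regular-embedding hypothesis on $Y$ and $Z$ would become superfluous once $S$ is regular). But the iteration has a genuine gap, which you flag without closing. After the first d\'evissage at level $c_0=\codim{Y\cap Z}{S}$ you know $x:=\class{}{\cali{O}_Y\otimes^{\mathbbm L}\cali{O}_Z}\in\Filt{c_0+1}{\rm top}$, yet the Tor sheaves still have support of codimension $c_0$, so~\ref{lem:k0-cycle-length} cannot be re-applied to \emph{them} with $q=c_0+1$. One can of course rewrite $x$ as a combination of modules supported in codimension $\ge c_0+1$, but those are no longer the Tors, and there is no reason their lengths at a point $w'\in S^{(c_0+1)}$ should recover the intersection multiplicity $\chi_{w'}$. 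Indeed, for $w'$ lying on (but not generic in) a lower-codimension component of $Y\cap Z$ the stalks of the Tors are not of finite length and $\chi_{w'}$ is undefined, yet such $w'$ may well contribute to the image of $x$ in $\Grr{c_0+1}{\rm top}{S}$. Your observation about maximal points of codimension-$c$ components handles only those $w$ at which $\chi_w$ makes sense; it says nothing about the others. The appeal to ``analogous computations in [SGA 6, Exp.~X]'' does not help: that expos\'e supplies exactly~\ref{lem:k0-cycle-length} and nothing stronger.

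The paper sidesteps the iteration by a different, geometric reduction. It blows up $S$ along $Z$: the exceptional divisor $i'\colon Z'=\sproj{}{}(\cali{N}_{Z/S})\hookrightarrow S'$ is a regular immersion of codimension~$1$, so only a \emph{single} application of~\ref{thm:cod-1-inverse-image-top-filt} is ever needed. The identity $f_\ast(i'_\ast\lambda_{-1}(\cali{F}))=\class{}{\cali{O}_Z}$ and the projection formula express $\class{}{\cali{O}_Z}\cdot\class{}{\cali{O}_Y}$ as $f_\ast$ of a product on $S'$; after decomposing $f^\ast\class{}{\cali{O}_Y}=e+i'_\ast(z')$ via~\ref{thm:G0-localization} with $e\in\Fil{q}{\rm top}{S'}$, the $e$-term is controlled by~\ref{thm:cod-1-inverse-image-top-filt} together with $\lambda_{-1}(\cali{F})\in\Filt{p-1}{\gamma}$ and~\ref{thm:gamma-top-filtration}, while the $z'$-term is pushed down to $Z$ and controlled by $\lambda_{-1}(\cali{N}_{Z/S})\in\Filt{p}{\gamma}$ together with~\ref{lem:regular-codim}. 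The regular-embedding hypothesis is used essentially here (for the blow-up structure and the rank of $\cali{N}_{Z/S}$), whereas in your approach it plays no role---another sign that the route you sketch, if completable, lies deeper than what you have written.
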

\begin{proof}
We assume $p\geq q$. Denote $\cali{O}_Y$ (considered as an $S-$module) by $\cali{E}$. Consider the blow-up of $S$ along $Z$;
\[
\xymatrix{
Z'\ar[d]^-{f'} \ar@{^(->}[r]^-{i'} & S'\ar[d]^-{f}\\
Z\ar@{^(->}[r]^-{i} & S.
}
\]
The precise definition is given in [EGA II, 8.1.3]. The morphism $f$ is a surjective perfect projective morphism. Therefore $Rf_\ast$ and $Lf^\ast$ give well-defined homomorphisms on $K_0'$ (cf.~[SGA 6, Exp. IV, 2.12 \& Exp. VII, 1.9]). Also note that the morphism $f'\colon Z'\to Z$ is isomorphic to $\sproj{}{}(\cali{N}_{Z/S})\to Z$ where $\cali{N}_{Z/S}$ is the co-normal sheaf of $i$ which is a locally free $Z-$module of rank $p$ (cf.~[EGA IV$_4$, 19.4]). The morphism $i'$ being a regular closed immersion is a perfect morphism and hence defines a homomorphism $i'_\ast\colon K_0(Z')\to K_0(S')$. Let $j$ (resp. $j'$) be the open complement of $i$ (resp. $i'$). Form the diagram 
\[
\xymatrix{
\Fil{q-1}{\rm top}{Z'}'\ar[r]^-{i'_\ast} & \Fil{q}{\rm top}{S'}\ar[r]^-{{j'}^\ast} & \Fil{q}{\rm top}{S'\setminus Z'}\to 0\\
 & \Fil{q}{\rm top}{S}\ar[r]^-{{j}^\ast}\ar[ur]^-{(f\circ j')^\ast} & \Fil{q}{\rm top}{S\setminus Z}\ar@<12px>[u]_-{{f''}^\ast}
}
\]
in which $\Fil{q-1}{\rm top}{Z'}'$ is the subgroup of $K_0'(Z')$ consisting of elements $z'$ with $i'_\ast(z')\in \Filt{q}{\rm top}$ and $f''$ is the restriction of $f$ to $S'\setminus Z'$. The diagram is commutative with an exact row. In fact $j'^\ast$ is by~\ref{lem:j*-surjective} surjective. If $s'\in \Fil{q}{\rm top}{S'}$ such that $j'^\ast(s')=0$, then it follows from~\ref{thm:G0-localization} that $s'=i'_\ast (z')$ for some $z'\in K_0'(Z')$. This shows that the diagram above has an exact row. Consider in particular the element $f^\ast (\class{}{\cali{E}})\in K_0'(S')$. Using the diagram we obtain
\begin{equation}\label{eq:f*E-decomposition}
f^\ast (\class{}{\cali{E}})=e+i'_\ast(z') 
\end{equation}
for some $e\in \Fil{q}{\rm top}{S'}$ and some $z'\in K_0'(Z')$. Next we claim that the codimension (in $S'$) of each closed subset $X'\subseteq S'$ is $\leq \codim{fX'}{S}$. To see this let $x=f(x')\in fX'$ with $\dime{\cali{O}_{S,x}}$ being the codimension of $fX'$ in $S$. If $x\not\in Z$, then $\dime{\cali{O}_{S,x}}=\dime{\cali{O}_{S',x'}}$ which is $\geq \codim{X'}{S'}$. Otherwise assume that $f^{-1}(x)\subseteq Z'$. It follows from the dimension formula for $f'$ that
\[
\dime{\cali{O}_{Z',x'}}=\dime{\cali{O}_{Z,x}}+d_x
\]
where $d_x$ is dimension of the local ring of $f^{-1}(x)$ at $x'$. By regularity of the embeddings $i$ and $i'$ we conclude that
\begin{align*}
\dime{\cali{O}_{S',x'}}&=\dime{\cali{O}_{Z',x'}}+1\\
&= \dime{\cali{O}_{Z,x}}+d_x+1\\
&= \dime{\cali{O}_{S,x}}+(d_x+1-p)
\\& \leq \dime{\cali{O}_{S,x}}.
\end{align*}
This shows that $\codim{X'}{S'}\leq \codim{fX'}{S}$. In particular, since for each coherent modules $\cali{E}$ and each $i\geq 0$ the support of ${R^if_\ast (\cali{E})}$ is $\subseteq f\supp{\cali{E}}$, it follows that
\[
f_\ast\Fil{n}{\rm top}{S'}\subseteq \Fil{n}{\rm top}{S}
\]
for all $n\geq 0$. Consider the element $\lambda_{-1}(\cali{F})\in K_0(Z')$ where
\[
0\to \cali{F}\to {f'}^\ast\cali{N}_{Z/S}\to \cali{O}_{Z'}(1)\to 0
\]
is the exact sequence associated to the canonical epimorphism ${f'}^\ast\cali{N}_{Z/S}\to \cali{O}_{Z'}(1)$. It follows from the projection formula~\ref{thm:projection-formula} and the the identity $f_\ast(\lambda_{-1}(\cali{F}))=1$ from [SGA 6, Exp. VI] that 
\[
f_\ast \bigl(i'_\ast\lambda_{-1}(\cali{F})\cdot f^\ast(\cali{E})\bigr)=i_\ast(1_Z)\cdot \class{}{\cali{E}}.
\]
We shall compute the left hand side using the equation~\eqref{eq:f*E-decomposition}: for the second term of the resulting equation we use [SGA 6] to obtain
\[
i'_\ast (\lambda_{-1}(\cali{F}))\cdot i'_\ast(z')=i'_\ast (\lambda_{-1}(\cali{F})\cdot (1-\class{}{\cali{O}_{Z'}(1)})\cdot z').
\]
Since $\cali{O}_{Z'}(1)$ is of rank one, it follows from the exact sequence above that $\lambda_{-1}(\cali{F})\cdot (1-\class{}{\cali{O}_{Z'}(1)})=\lambda_{-1}({f'}^{\ast}\cali{N}_{Z/S})$. The later is by definition equal to $(-1)^{p}\gamma^{p}(f'^\ast\cali{N}_{Z/S}-p)$ which belongs to $\Filt{p}{\gamma}$. Therefore by~\ref{thm:gamma-top-filtration} and~\ref{lem:top-filtration-functor} we obtain
\begin{align*}
f_\ast \bigl(i'_\ast\lambda_{-1}(\cali{F}))\cdot i'_\ast(z')\bigr) & = i_\ast\bigl(\lambda_{-1}(\cali{N}_{Z/S})\cdot {f'}_\ast (z'))\in \Filt{p+q}{\rm top}.
\end{align*}
For the first term we use the projection formula,~\ref{thm:cod-1-inverse-image-top-filt}, the fact that $\lambda_{-1}(\cali{F})\in \Filt{p-1}{\gamma}$ and~\ref{thm:gamma-top-filtration} to obtain
\begin{align*}
 i'_\ast(\lambda_{-1}(\cali{F}))\cdot e & =  i'_\ast\bigl(\lambda_{-1}(\cali{F})\cdot {i'}^\ast(e)\bigr)\in \Filt{p+q}{\rm top}
\end{align*}
This in view of the compatibility of $f_\ast$ with the topological filtration proven above implies that $f_\ast\bigl(i'_\ast(\lambda_{-1}(\cali{F}))\cdot e\bigr)$ belongs to $\Filt{p+q}{\rm top}$. Puting these together, we have
\[
i_\ast(1_Z)\cdot \class{}{\cali{E}}\in \Filt{p+q}{\rm top}.
\]
The result follows.
\end{proof}
\begin{remark} The proof shows that for given $p\geq q$ and any regular immersion $i\colon Z\to S$ of codimension $p$ we have
 \[
\class{}{\cali{O}_Z}\cdot \Fil{q}{\rm top}{S}\subseteq \Fil{p+q}{\rm top}{S}. 
 \]
\end{remark}
\begin{remark}
For $S$ as above denote the quotient $\Fil{m}{\gamma}{S}/\Fil{m+1}{\gamma}{S}$ by $\Grr{m}{\gamma}{S}$. Define the graded group
\[
\Grr{\bullet}{\gamma}{S}:=\coprod_{m\geq 0} \Grr{m}{\gamma}{S}.
\]
This is a commutative graded algebra and $\Grr{0}{\gamma}{S}=\Z$. By~\ref{lem:K0-lambda-functor} and~\ref{pro:gamma-filtration}, the construction gives a contravariant functor from connected schemes $S$ to $\Z-$augmented graded rings. Also defining for each $q\in \Z$ the $K_0(S)$-module $\Grr{q}{\rm top}{S}$ as the quotient $\Fil{q}{\rm top}{S}/\Fil{q+1}{\rm top}{S}$ we obtain the graded $K_0(S)-$module
\[
\Grr{\bullet}{\rm top}{S}:=\coprod_{q\geq 0} \Grr{q}{\rm top}{S}.
\]
By~\ref{thm:gamma-top-filtration} this is a graded $\Grr{\bullet}{\gamma}{S}-$module with $\Grr{0}{\rm top}{S}=\Z$ for connected $S$. 

It is an interesting question to determine whether for a noetherian scheme $S$ (admitting an ample invertible sheaf) the $\Grr{\bullet}{\gamma}{S}-$module $\Grr{\bullet}{\rm top}{S}$ is finitely generated. 
\end{remark}
\section{Natural operations and coniveau filtration}\label{sec:natural-operations-k0}
For a regular noetherian separated scheme $S$, again we consider the isomorphism $\uptheta_S\colon K_0(S)\to K_0'(S)$ from~\ref{pro:theta_X-iso}. This is used to define a $\lambda-$ring structure on $K_0'(S)$: i.e. the collection of $\uptheta_S\circ \lambda^n\circ \uptheta_S^{-1}$ (denoted again by $\lambda^n$) for $n\geq 0$ defines a $\lambda-$ring structure on $K_0'(S)$.
\begin{proposition}\label{prop:fil-top-lambda-ideal}
Let $S$ be a regular noetherian separated scheme. For each $q\geq 0$, the subgroup $\vartheta^{-1}\Fil{q}{\rm top}{S}$ of $K_0(S)$ is a $\lambda-$ideal.
\end{proposition}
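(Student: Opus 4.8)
The plan is to prove directly that $J:=\uptheta_S^{-1}\Fil{q}{\rm top}{S}$ is stable under every operation $\lambda^n$, $n\geq 1$; since $J$ is an ideal, this is exactly the assertion that it is a $\lambda$-ideal. The case $q=0$ is trivial, as then $J=K_0(S)$, so assume $q\geq 1$. First I would record that $J$ is indeed an ideal of $K_0(S)$: this is immediate from~\ref{thm:gamma-top-filtration}(1) together with the fact that $\uptheta_S$ is a ring isomorphism (\ref{pro:theta_X-iso}(1)); note also $J\subseteq\ker(\epsilon)$ by~\ref{pro:theta_X-iso}(2). Because $1+tJ[\![t]\!]$ is a subgroup of the multiplicative group $1+tK_0(S)[\![t]\!]$ — here one uses that $J$ is an ideal — and $\lambda_t$ is a group homomorphism, it suffices to check that $\lambda_t(g)\in 1+tJ[\![t]\!]$, i.e. that $\lambda^n(g)\in J$ for all $n\geq 1$, for $g$ running over a set of generators of the \emph{abelian group} $J$. (Working with generators of $J$ as an abelian group, rather than as an ideal, is forced on us because $\lambda^n$ is not additive.)

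For the generating set I would take the elements $g_Z:=\uptheta_S^{-1}\class{}{\cali{O}_Z}$ with $i\colon Z\hookrightarrow S$ an integral closed subscheme of codimension $\geq q$: by~\ref{lem:k0-cycle-length} and descending induction on codimension — equivalently, by the d\'evissage used in its proof — the classes $\class{}{\cali{O}_Z}$ generate $\Fil{q}{\rm top}{S}$, hence the $g_Z$ generate $J$. Fix such a $Z$ and let $j\colon U:=S\setminus Z\hookrightarrow S$ be the complementary open immersion. The scheme $U$ is again regular, noetherian and separated, so $\uptheta_U$ is an isomorphism; since $\uptheta$ is compatible with the flat pullback $j^\ast$ and $j^\ast\class{}{\cali{O}_Z}=\class{}{\cali{O}_Z|_U}=0$, we get $\uptheta_U(j^\ast g_Z)=0$ and therefore $j^\ast g_Z=0$ in $K_0(U)$. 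As $j^\ast$ commutes with the $\lambda$-operations (\ref{lem:K0-lambda-functor}), this yields $j^\ast\lambda^n(g_Z)=\lambda^n(0)=0$ and hence $j^\ast\bigl(\uptheta_S\lambda^n(g_Z)\bigr)=0$ in $K_0'(U)$ for every $n\geq 1$.

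Now the exactness of $K_0'(Z)\xrightarrow{i_\ast}K_0'(S)\xrightarrow{j^\ast}K_0'(U)\to 0$ from~\ref{thm:G0-localization} shows that $\uptheta_S\lambda^n(g_Z)$ lies in the image of $i_\ast$. Finally $i_\ast K_0'(Z)\subseteq \Fil{q}{\rm top}{S}$, because $K_0'(Z)$ is generated by classes of coherent $Z$-modules $\cali{G}$ and $\supp{i_\ast\cali{G}}\subseteq Z$ has codimension $\geq q$ in $S$; hence $\uptheta_S\lambda^n(g_Z)\in\Fil{q}{\rm top}{S}$, i.e. $\lambda^n(g_Z)\in J$, as required. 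The substance of the argument is concentrated in the single observation that each generator $g_Z$ of $J$ restricts to $0$ on $S\setminus Z$, so that all of its $\lambda$-powers are automatically supported on a subscheme of codimension $\geq q$; the only real care needed is in the first step, the reduction to the generators $g_Z$ via~\ref{lem:k0-cycle-length}. In particular I do not expect to need the multiplicativity theorem~\ref{thm:multiplicativity-regular} for this proposition.
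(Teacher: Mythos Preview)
Your proof is correct and follows essentially the same route as the paper's: restrict to the open complement of a codimension $\geq q$ closed subscheme, use that $j^\ast$ is a $\lambda$-morphism to kill $\lambda^n$ of the generator, and then invoke the localization sequence~\ref{thm:G0-localization} to conclude that $\lambda^n$ of the generator lies in the image of $i_\ast$. The only cosmetic difference is that the paper works directly with an arbitrary coherent $\cali{E}$ of support of codimension $\geq q$ (taking $Z$ to be a closed subscheme supporting $\cali{E}$, via d\'evissage) rather than first reducing to $\cali{O}_Z$ for integral $Z$ through~\ref{lem:k0-cycle-length}; your explicit use of the subgroup $1+tJ[\![t]\!]$ to pass to generators is a detail the paper leaves implicit.
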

\begin{proof}
Let $\cali{E}$ be a coherent $S-$module whose support has codimension $\geq q$. There is a closed subscheme $i\colon Z\to S$ of codimension $\geq q$ such that $e:=\class{}{\cali{E}}$ is in the image of $i_\ast\colon K_0'(Z)\to K_0'(S)$; use (the proof of) [EGA III$_1$, 3.1.2]. Let $j\colon U\to S$ be the open complement of $Z$. Let $n\geq 1$. The homomorphism $u^\ast\colon K_0(S)\to K_0(U)$ is by~\ref{lem:K0-lambda-functor} a $\lambda-$morphism and hence $u^\ast\lambda^n(e)=\lambda^n(u^\ast e)=0$. Using~\ref{thm:G0-localization} (and the discussion above), it follows that $\lambda^n(e)$ belongs to the image of $i_\ast$; i.e. $\lambda^n(e)\in \Filt{q}{\rm top}$. 
\end{proof}
From now on and for $S$ as in~\ref{prop:fil-top-lambda-ideal} we consider $K_0'(S)$ as a $\lambda-$ring without mentioning $\uptheta_S$. 

For any $\lambda$-ring $A$, there are defined Adams operations $\psi_n\colon A\to A$ for $n\geq 1$;
\begin{equation}\label{def:adams}
-t\frac{d\lambda (x)}{dt}/\lambda(x)=\sum_{n\geq 1} \psi_n(x)(-t)^n.
\end{equation}
Equivalently, we let $\psi_1=\id{A}$ and for any $n\geq 1$ and $x\in A$, the element $\psi_n(x)$ is defined inductively by the equation
\begin{equation*}
-n\lambda^n(x)=(-1)^n\psi_n(x)+(-1)^{n-1}\psi_{n-1}(x)\lambda^1(x)+\dotsc+(-1)\psi_1(x)\lambda^{n-1}(x).
\end{equation*}
\begin{cor}\label{cor:adams-nlambda}
For any $q\geq 1$, $n\geq 1$ and $x\in \Fil{q}{\rm top}{S}$ we have
\[
\psi_n(x)+(-1)^nn\lambda^n(x)\in \Fil{q+1}{\rm top}{S}.
\]
\end{cor}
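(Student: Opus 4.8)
The idea is to read the statement off the Newton-type recursion
\[
-n\lambda^n(x)=(-1)^n\psi_n(x)+(-1)^{n-1}\psi_{n-1}(x)\lambda^1(x)+\dotsc+(-1)\psi_1(x)\lambda^{n-1}(x)
\]
that defines $\psi_n$, using only two inputs: that $\Fil{q}{\rm top}{S}$ is stable under every $\psi_i$ and every $\lambda^j$ with $j\geq 1$, and that multiplication by an element of $\Fil{1}{\gamma}{S}=\Fil{1}{\rm top}{S}$ carries $\Fil{q}{\rm top}{S}$ into $\Fil{q+1}{\rm top}{S}$. For $n=1$ there is nothing to prove, since $\psi_1(x)=\lambda^1(x)=x$ and the element in question is $0$; so one assumes $n\geq 2$.

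First I would record the auxiliary fact that $\psi_i(x)\in\Fil{q}{\rm top}{S}$ for all $i\geq 1$ whenever $x\in\Fil{q}{\rm top}{S}$. By Proposition~\ref{prop:fil-top-lambda-ideal} the subgroup $\Fil{q}{\rm top}{S}$ is a $\lambda$-ideal, so it contains $\lambda^j(x)$ for every $j\geq 1$ and, being an ideal, every product of such elements. Solving the recursion for $\psi_n(x)$ and inducting on $n$ (base case $\psi_1(x)=x$) exhibits $\psi_n(x)$ as an integral linear combination of products $\lambda^{a_1}(x)\cdots\lambda^{a_k}(x)$ with all $a_\ell\geq 1$; hence $\psi_n(x)\in\Fil{q}{\rm top}{S}$.

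Next I would multiply the recursion by $(-1)^n$, split off the $i=n$ summand, and solve for $\psi_n(x)$; since the two resulting copies of $n\lambda^n(x)$ carry opposite signs they cancel, leaving
\[
\psi_n(x)+(-1)^n n\lambda^n(x)=\sum_{i=1}^{n-1}(-1)^{n+i+1}\,\psi_i(x)\,\lambda^{n-i}(x).
\]
Fix $i$ with $1\leq i\leq n-1$. Since $q\geq 1$, the auxiliary fact together with Proposition~\ref{pro:theta_X-iso}(2) gives $\psi_i(x)\in\Fil{q}{\rm top}{S}\subseteq\Fil{1}{\rm top}{S}=\Fil{1}{\gamma}{S}$, while $\lambda^{n-i}(x)\in\Fil{q}{\rm top}{S}$ because $n-i\geq 1$ and $\Fil{q}{\rm top}{S}$ is a $\lambda$-ideal. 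Hence $\psi_i(x)\,\lambda^{n-i}(x)\in\Fil{1}{\gamma}{S}\cdot\Fil{q}{\rm top}{S}\subseteq\Fil{q+1}{\rm top}{S}$ by Proposition~\ref{thm:gamma-top-filtration}(2), and summing over $i$ concludes the argument.

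There is no real obstacle here; the only thing to be careful about is that one must \emph{not} invoke multiplicativity of the coniveau filtration in general (which is exactly the open problem), but only multiplicativity against the first step of the $\gamma$-filtration, which is legitimate by Propositions~\ref{thm:gamma-top-filtration} and~\ref{pro:theta_X-iso}. This is also precisely where the hypothesis $q\geq 1$ is used, since it is what forces $\Fil{q}{\rm top}{S}\subseteq\Fil{1}{\rm top}{S}$.
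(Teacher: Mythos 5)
Your proof is correct and follows essentially the same route as the paper: stability of $\Fil{q}{\rm top}{S}$ under the $\psi_i$ via Proposition~\ref{prop:fil-top-lambda-ideal}, the inclusion $\Fil{q}{\rm top}{S}\subseteq\Fil{1}{\gamma}{S}$ from $q\geq 1$, and Proposition~\ref{thm:gamma-top-filtration}(2) applied to each cross term $\psi_i(x)\lambda^{n-i}(x)$ in the defining recursion. Your version merely spells out the induction behind the auxiliary fact and the sign bookkeeping, both of which the paper leaves implicit.
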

\begin{proof}
First note that by~\ref{prop:fil-top-lambda-ideal} and the definition of Adams operations, the element $\psi_i(x)$ belongs to $\Filt{q}{\rm top}$ for each $i\geq 1$. But $q\geq 1$ and hence $\psi_i(x)\in \Filt{1}{\gamma}$. This means that $\psi_i(x)\lambda^{n-i}(x)$ belongs to $\Filt{q+1}{\rm top}$ for $1\leq i\leq n-1$. This together with the equation defining $\psi_n$ prove the assertion.  
\end{proof}
To prove our next result, we need to recall one more notation from [SGA 6, Exp. V, 4.9]. For $A$ as above fix an element $N$ such that $\lambda^k(N)=0$ for all $k>d$. Let $n\geq 1$ and $x\in A$. The element $\lambda^n(x\lambda_{-1}(N))$ is divisible by $\lambda_{-1}(N)$. There are (universally) well-defined elements $\lambda^n(N, x)$ such that
\[
\lambda^n(x\lambda_{-1}(N))=\lambda^n(N, x)\lambda_{-1}(N).
\]
Similarly $\gamma^n(N, x)$ is defined.
\begin{lemma}\label{lem:SGA6-V-6.10}
Let $R$ be a $\Z-$augmented $\lambda-$ring, $N$ an element of $R$ such that $\lambda^k(N)=0$ for $k>d$ and $x\in \Filt{q}{\gamma}$ with $q\geq 0$. Then for each $n\geq 0$
\[
\lambda^n(N,x)+(-1)^nn^{q+d-1}x\in \Filt{q+1}{\gamma}.
\]
\end{lemma}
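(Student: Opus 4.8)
The plan is to deduce the statement from a single computation carried out in a \emph{universal, torsion-free} $\lambda$-ring, and then to transport the conclusion back by the splitting principle. The reason some reduction is unavoidable is that $x\mapsto\lambda^n(N,x)$ is not additive, so one cannot simply check the membership on generators of $\Filt{q}{\gamma}$; and the reason one wants the universal model to be torsion-free is that the cheap argument via the Newton relations only produces the identity after multiplication by $n$, and that factor must be cancelled. (For $n=0$ there is nothing of substance to prove, so assume $n\geq 1$.)

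\emph{Step 1: reduction to a split polynomial model.} Since $\lambda^n(N,x)$ is obtained from $x$ and the $\lambda^i(N)$ by a universal polynomial (see [SGA 6, Exp.~V, 4.9]) and $x\in\Filt{q}{\gamma}$ is, by definition, a universal $\Z$-combination of products $\gamma^{i_1}(y_1)\cdots\gamma^{i_p}(y_p)$ with $\epsilon(y_j)=0$ and $\sum i_j\geq q$, it suffices to prove the inclusion in the universal $\Z$-augmented $\lambda$-ring generated by one element $N$ with $\lambda^k(N)=0$ for $k>d$ together with finitely many formal generators in the roles of the $y_j$. Applying the splitting principle [SGA 6, Exp.~V] — which replaces this ring by a faithfully flat extension in which $N$ and the $y_j$ split into rank-one pieces and which is compatible with the $\gamma$-filtrations — we may and do assume that our $\lambda$-ring is a polynomial ring over $\Z$ in the Chern roots $\ell_1,\dots,\ell_d$ of $N$ and in the Chern roots of the $y_j$; in such a ring $\Filt{m}{\gamma}$ is the $m$-th power of the augmentation ideal $I:=\Filt{1}{\gamma}$, each $\Grr{m}{\gamma}{R}$ is a free $\Z$-module (in particular torsion-free), and $\lambda_{-1}(N)=\prod_{i=1}^d(1-\ell_i)$ is a non-zero-divisor whose image in $\Grr{\bullet}{\gamma}{R}$ is a product of $d$ distinct degree-one generators.

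\emph{Step 2: an auxiliary filtration statement, and the deduction.} It suffices to establish, in this ring, that for every $w\in\Filt{m}{\gamma}$ and every $n\geq 1$
\[
\lambda^n(w)+(-1)^n n^{m-1}w\in\Filt{m+1}{\gamma}.
\]
Granting this with $m=q+d$ and $w=x\,\lambda_{-1}(N)\in\Filt{q+d}{\gamma}$, the defining relation $\lambda^n(N,x)\lambda_{-1}(N)=\lambda^n\bigl(x\lambda_{-1}(N)\bigr)$ gives
\[
\Bigl(\lambda^n(N,x)+(-1)^n n^{q+d-1}x\Bigr)\lambda_{-1}(N)\in\Filt{q+d+1}{\gamma}.
\]
Passing to $\Grr{\bullet}{\gamma}{R}$ and using that multiplication by the image of $\lambda_{-1}(N)$ is injective there and raises degree by exactly $d$, we conclude $\lambda^n(N,x)+(-1)^n n^{q+d-1}x\in\Filt{q+1}{\gamma}$, which is the assertion.

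\emph{Step 3: proof of the auxiliary statement.} Modulo $\Filt{m+1}{\gamma}$ the operation $\lambda^n$ is $\Z$-linear on $\Filt{m}{\gamma}$: indeed $\lambda^i$ preserves $\Filt{m}{\gamma}$ (write $\lambda^i$ in terms of the $\gamma^j$ and use $\gamma^j(\Filt{m}{\gamma})\subseteq\Filt{m+j-1}{\gamma}$), so every cross term $\lambda^i(w)\lambda^{j}(w')$ with $i,j\geq 1$ lies in $\Filt{2m}{\gamma}\subseteq\Filt{m+1}{\gamma}$. Hence, after reducing a monomial coefficient to $1$ modulo $\Filt{m+1}{\gamma}$, we may take $w=(1-\ell_1')\cdots(1-\ell_m')$, a product of rank-one differences. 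On this torsion-free $\lambda$-ring the Adams operation $\psi_n$ is a ring endomorphism, and $\psi_n(1-\ell')=1-{\ell'}^n=(1-\ell')(1+\ell'+\cdots+{\ell'}^{\,n-1})\equiv n(1-\ell')\pmod{\Filt{2}{\gamma}}$, so by multiplicativity of the filtration $\psi_n(w)\equiv n^m w\pmod{\Filt{m+1}{\gamma}}$. Feeding this into the Newton identity defining the Adams operations (see~\eqref{def:adams}) and discarding the products $\psi_i(w)\lambda^{n-i}(w)\in\Filt{2m}{\gamma}\subseteq\Filt{m+1}{\gamma}$ for $1\leq i\leq n-1$ yields $-n\,\lambda^n(w)\equiv(-1)^n\psi_n(w)\equiv(-1)^n n^m w\pmod{\Filt{m+1}{\gamma}}$, that is, $n\bigl(\lambda^n(w)+(-1)^n n^{m-1}w\bigr)\in\Filt{m+1}{\gamma}$. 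Since $\Grr{m}{\gamma}{R}$ is torsion-free, the factor $n$ may be cancelled, giving the auxiliary statement.

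\emph{Expected main obstacle.} The two delicate points are: first, setting up the passage to the split universal model so that it is genuinely compatible with the $\gamma$-filtration, so that membership in $\Filt{q+1}{\gamma}$ really descends; and second — the real crux — obtaining the conclusion \emph{integrally} and not merely after inverting $n$. The Newton/Adams manipulation is soft but only yields the relation up to $n$-torsion, and it is precisely the torsion-freeness of $\Grr{\bullet}{\gamma}{R}$ in the universal polynomial model that licenses the final cancellation; the division by $\lambda_{-1}(N)$ in Step 2 is harmless for the same structural reason (it is a product of distinct polynomial generators).
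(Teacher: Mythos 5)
Your argument is correct and follows essentially the same route as the paper: pass to a universal torsion-free polynomial model, apply the congruence $\lambda^n(w)\equiv(-1)^{n+1}n^{m-1}w \pmod{\Filt{m+1}{\gamma}}$ to $w=x\lambda_{-1}(N)\in\Filt{q+d}{\gamma}$, and cancel $\lambda_{-1}(N)$ using the graded polynomial structure. The only differences are cosmetic: the paper works in the universal $\lambda$-ring on the generators (a polynomial ring in the $\gamma^m(N_0')$ and the $\gamma^p(x_{\alpha,j})$, by [SGA 6, Exp.~V, 4.9.1]) rather than fully splitting into Chern roots, and it quotes the auxiliary congruence from SGA 6 where you prove it via Adams operations and the Newton identities.
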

\begin{proof}
We shall follow the setting and notations in the proof of [SGA 6, Exp. V, 6.10]. We may write
\[
x=\sum_\alpha a_\alpha \gamma^{i_1}(x_{\alpha,1})\dotsc  \gamma^{i_k}(x_{\alpha,k})
\]
where $k\geq 0$, $a_\alpha\in \Z$, $i_1+\dotsc i_k\geq q$ and $x_{\alpha,j}\in \Filt{1}{\gamma}$. We consider the universal setting for the assertion; let $R_0$ be the (universal) $\lambda-$ring over $\Z$ generated by $N'_0$ and $x_{\alpha,j}$ subject to the relation $\gamma^k(N'_0)=0$ for all $k>d$. The augmentation $\epsilon\colon R_0\to \Z$ is defined to vanish on the generators. Let $x_0\in R_0$ be defined by the sum appearing in the equation above for $x$. Set $y=(-1)^dx_0\gamma^d(N'_0)$. Note that $y\in \Filt{q+d}{\gamma}$. In the $\lambda-$ring $R_0$ we have
\[
\lambda^n(y)+(-1)^nn^{q+d-1}y\in \Filt{q+d+1}{\gamma}.
\]
Set $N_0:=N'_0-d$. By definition $\lambda^n(y)=\lambda^n(N_0,x)\lambda_{-1}(N_0)$ and $\lambda_{-1}(N_0)=(-1)^d\gamma^d(N'_0)$. Therefore
\[
(-1)^d\gamma^d(N'_0)\bigl(\lambda^n(N_0, x_0)+(-1)^nn^{q+d-1}x_0\bigr)\in \Filt{q+d+1}{\gamma}
\]
Since the ring $R_0$ is, by~[SGA 6, Exp. V, 4.9.1], a polynomial ring in $\gamma^m(N'_0)$ for $1\leq m\leq d$ and $\gamma^p(x_{\alpha,j})$ for $p\geq 1$ and its $\gamma-$filtration coincides with that given by the degree of the latter generators, it follows that
\[
\lambda^n(N_0, x_0)+(-1)^nn^{q+d-1}x_0\in \Filt{q+1}{\gamma}.
\]
To prove the assertion, it is enough to apply to this the $\lambda-$morphism $R_0\to R$ given by $N_0\mapsto N$ and $x_{\alpha,j}\mapsto x_{\alpha, j}$.
\end{proof}
\begin{theorem}\label{thm:adams-top-fil}
If $S$ is a regular noetherian separated scheme, then for any $n\geq 1$, $q\geq 1$ and $x\in \Fil{q}{\rm top}{S}$ we have
\[
\psi_n(x)-n^{q}x\in \Filt{q+1}{\rm top}
\]
\end{theorem}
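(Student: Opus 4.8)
The plan is to reduce, via Corollary \ref{cor:adams-nlambda}, to a statement about $\lambda^n$, then to a single generator $\class{}{\cali{O}_Z}$ of the graded quotient with $Z$ regularly embedded, and finally to run the blow‑up argument from the proof of Theorem \ref{thm:multiplicativity-regular}, with multiplication replaced by $\lambda^n$ and the outcome fed into Lemma \ref{lem:SGA6-V-6.10}.

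\emph{Reductions.} Since $q\geq 1$, Corollary \ref{cor:adams-nlambda} gives $\psi_n(x)\equiv -(-1)^n n\,\lambda^n(x)\pmod{\Filt{q+1}{\rm top}}$, and because $-n^q x=-(-1)^n n\cdot(-1)^n n^{q-1}x$ the assertion to be proved is equivalent to
\[
\lambda^n(x)+(-1)^n n^{q-1}x\in \Fil{q+1}{\rm top}{S}.
\]
So fix $q\geq 1$. The subgroup $\Fil{q}{\rm top}{S}$ is a $\lambda$‑ideal (Proposition \ref{prop:fil-top-lambda-ideal}) and, modulo $\Fil{q+1}{\rm top}{S}$, it is generated by the classes $\class{}{\cali{O}_Z}$ with $Z\subseteq S$ integral of codimension $q$ (Proposition \ref{lem:k0-cycle-length}); hence one may take $x=\class{}{\cali{O}_Z}$. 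Let $U\subseteq S$ be the complement of the non‑lci locus of $Z$; this locus is closed in $Z$ and proper, hence of codimension $\geq q+1$ in $S$, and $Z\cap U\hookrightarrow U$ is a regular closed immersion of pure codimension $q$ ($U$ being again regular noetherian separated). Since $\lambda^n$ commutes with the flat restriction $j^\ast$ to $U$ (Lemma \ref{lem:K0-lambda-functor}), since $j^\ast$ carries $\Fil{q+1}{\rm top}{S}$ onto $\Fil{q+1}{\rm top}{U}$ (Lemma \ref{lem:j*-surjective}), and since $\ker(j^\ast)\subseteq\Fil{q+1}{\rm top}{S}$ (Proposition \ref{thm:G0-localization}, the complement of $U$ having codimension $\geq q+1$), it suffices to treat $i\colon Z\hookrightarrow S$ a regular closed immersion of pure codimension $q$.

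\emph{The regularly embedded case.} Let $\cali{N}$ be the conormal sheaf of $i$, locally free of rank $q$, so $\lambda^k(\cali{N})=0$ for $k>q$, and recall $\lambda_{-1}(\cali{N})=(-1)^q\gamma^q(\cali{N}-q)\in\Filt{q}{\gamma}{Z}$ as in the proof of Theorem \ref{thm:multiplicativity-regular}. Applying Lemma \ref{lem:SGA6-V-6.10} with $d=q$ to the element $1\in\Filt{0}{\gamma}{Z}$, together with the identity $\lambda^n\bigl(1\cdot\lambda_{-1}(\cali{N})\bigr)=\lambda^n(\cali{N},1)\,\lambda_{-1}(\cali{N})$, yields
\[
\lambda^n\bigl(\lambda_{-1}(\cali{N})\bigr)+(-1)^n n^{q-1}\lambda_{-1}(\cali{N})\in \Filt{1}{\gamma}{Z}\cdot\lambda_{-1}(\cali{N})\subseteq\Filt{q+1}{\gamma}{Z}
\]
by Proposition \ref{pro:gamma-filtration}. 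It remains to move $\class{}{\cali{O}_Z}$ into a situation where, modulo $\Filt{q+1}{\rm top}$, it is a class of this shape. Form the blow‑up $f\colon S'\to S$ of $S$ along $Z$, with exceptional divisor $i'\colon Z'=\sproj{}{}(\cali{N})\hookrightarrow S'$ and $f'\colon Z'\to Z$; as in the proof of Theorem \ref{thm:multiplicativity-regular} one has $\class{}{\cali{O}_Z}=f_\ast\bigl(i'_\ast\lambda_{-1}(\cali{F})\bigr)$ for the rank $q-1$ bundle $\cali{F}$ defined by $0\to\cali{F}\to f'^\ast\cali{N}\to\cali{O}_{Z'}(1)\to 0$. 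Pushing $\lambda^n$ (equivalently, via Corollary \ref{cor:adams-nlambda}, $\psi_n$) through $f_\ast$ — using $Rf_\ast\cali{O}_{S'}=\cali{O}_S$, so $f_\ast f^\ast=\id{}$, and the Riemann–Roch behaviour of $\lambda^n$ under the proper perfect morphism $f$ — through $i'_\ast$ by the projection formula, and through the projective bundle $f'$, the cannibalistic corrections are governed by $\lambda_{-1}(\cali{F})$, by $\cali{O}_{Z'}(1)$ (which cancels against the self‑intersection factor of the divisor $Z'$) and by the relative cotangent complex of $f$; the displayed congruence and Lemma \ref{lem:SGA6-V-6.10} then contribute exactly the scalar $(-1)^n n^{q-1}$, while all remaining terms land in $\Fil{q+1}{\rm top}{S}$ by Proposition \ref{thm:gamma-top-filtration}, Proposition \ref{thm:cod-1-inverse-image-top-filt} and Lemma \ref{lem:top-filtration-functor}, precisely as the error terms were controlled in Theorem \ref{thm:multiplicativity-regular}. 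This gives $\lambda^n(\class{}{\cali{O}_Z})+(-1)^n n^{q-1}\class{}{\cali{O}_Z}\in\Fil{q+1}{\rm top}{S}$, which finishes the proof.

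\emph{Main obstacle.} The delicate step is the passage of $\lambda^n$ (equivalently $\psi_n$) through the pushforward $f_\ast$ along the blow‑up, and the verification that the resulting correction factor is exactly $(-1)^n n^{q-1}$ modulo $\Fil{q+1}{\rm top}{S}$: this is a Riemann–Roch–type computation whose denominator‑free packaging is precisely what Lemma \ref{lem:SGA6-V-6.10} provides and whose error terms must be tracked term by term as in Theorem \ref{thm:multiplicativity-regular}. An alternative realization of the same step, bypassing the blow‑up, is the deformation to the normal bundle: on the total space $\pi\colon V\to Z$ of the normal bundle of $Z$ the zero‑section class equals $\pi^\ast\lambda_{-1}(\cali{N})$, so the displayed congruence applies there verbatim (using $\uptheta_V\Filt{q+1}{\gamma}{V}\subseteq\Filt{q+1}{\rm top}{V}$), and one descends to $S$ through the codimension‑one regular immersions of the two fibres of the deformation into its total space via Proposition \ref{thm:cod-1-inverse-image-top-filt}; the delicate point then becomes the compatibility of the two fibre restrictions.
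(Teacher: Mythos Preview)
Your reductions are exactly those of the paper: pass from $\psi_n$ to $\lambda^n$ via Corollary~\ref{cor:adams-nlambda}, reduce to $x=\class{}{\cali{O}_Z}$ via Proposition~\ref{lem:k0-cycle-length}, and replace $Z$ by a regular immersion using the localization sequence and Lemma~\ref{lem:j*-surjective}. Your application of Lemma~\ref{lem:SGA6-V-6.10} to $1\in K_0(Z)$ with $d=q$ is also the right idea and yields the correct congruence $\lambda^n(N,1)+(-1)^n n^{q-1}\in\Filt{1}{\gamma}{Z}$.

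The gap is in what you do next. Once $i\colon Z\hookrightarrow S$ is a regular immersion, the paper does \emph{not} run the blow-up argument of Theorem~\ref{thm:multiplicativity-regular}; it simply invokes Jouanolou's formula \cite[2.1]{jouanolou-1970}
\[
\lambda^n(i_\ast x)=i_\ast\bigl(\lambda^n(N,x)\bigr)\qquad\text{in }K_0(S),
\]
where $N=\class{}{\cali{N}_{Z/S}}$. Combined with the congruence above and $i_\ast\Filt{1}{\gamma}{Z}\subseteq\Filt{q+1}{\rm top}{S}$, this finishes the proof in one line. Your proposal instead attempts to transport $\lambda^n$ through $f_\ast$ along the blow-up, but the sentence ``the Riemann--Roch behaviour of $\lambda^n$ under the proper perfect morphism $f$'' and the subsequent appeal to ``cannibalistic corrections'' are precisely the content of Jouanolou's theorem, and you have not established them: nothing in the paper up to this point tells you how $\lambda^n$ (a non-additive, non-multiplicative operation) interacts with $f_\ast$ or $i'_\ast$. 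The projection formula and the identity $f_\ast f^\ast=\id{}$ are statements about the $K_0$-module structure, not about $\lambda^n$, so they do not by themselves let you commute $\lambda^n$ past $f_\ast$. Your ``main obstacle'' paragraph correctly identifies this as the crux, but an honest acknowledgement of an obstacle is not a proof; as written, the argument is circular, since carrying out your blow-up computation would amount to reproving Jouanolou's formula. The fix is simply to cite \cite[2.1]{jouanolou-1970} directly, as the paper does, and drop the blow-up entirely.
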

\begin{proof}
First note that by~\ref{cor:adams-nlambda} (or its proof), the ideals $\Filt{r}{\rm top}$ are stable under the operations $\psi_n$. In view of~\ref{lem:k0-cycle-length}, this shows that it is enough to prove that if $i\colon Z\to S$ is an integral closed subscheme of codimension $q$, and $x\in K_0'(Z)$, then $\alpha:=\psi_n(i_\ast x)-n^qi_\ast(x)$ belongs to $\Filt{q+1}{\rm top}$ (in fact we just need the case $x=\class{}{\cali{O}_Z}$). We may assume that $i$ is a regular immersion. To see this note that since $S$ is regular, the irreducible subscheme $Z$ is generically regularly embedded in $S$; there is an open subscheme $j\colon U\to S$ containing the generic point of $Z$ with $S\setminus U\subset Z$ of codimension $\geq 1$ and such that $Z\cap S\to U$ is regular of codimension $q$. Using the localization exact sequence
\[
K_0'(S\setminus U)\xrightarrow{} K_0'(S)\xrightarrow{j^\ast} K_0'(U)\to 0
\]
and~\ref{lem:j*-surjective}, it is enough to show $j^\ast(\alpha)\in \Filt{q+1}{\rm top}$. For the latter note that as $j^\ast$ is a $\lambda-$morphism (cf.~\ref{lem:K0-lambda-functor} and the discussions following and preceding~\ref{prop:fil-top-lambda-ideal}), we can replace $i$ by $Z\cap S\to U$. Therefore assume that $i\colon Z\to S$ is regular. Using~\ref{cor:adams-nlambda} and the fact that $Z$ is regular (or restricting to the case $x\in {\rm im}(\uptheta_Z)$), we are reduced to show
\[
\lambda^n(i_\ast x)+(-1)^{n}n^{q-1}i_\ast (x)\in \Filt{q+1}{\rm top}
\]
for all $x\in K_0(Z)$. The result~\cite[2.1]{jouanolou-1970} states that $\lambda^n(i_\ast x)=i_\ast \lambda^n(N, x)$ in $K_0(S)$ where $N$ is the class of the co-normal sheaf ${\cali{N}_{Z/S}}$ of $Z\hookrightarrow S$. Use~\ref{lem:SGA6-V-6.10} for $(N, d, q,n)=(\class{}{\cali{N}_{Z/S}},q, 0,n)$ to obtain
\[
\lambda^n(N, x)+(-1)^nn^{q-1}x\in \Filt{1}{\gamma}
\]
in $K_0(Z)$. Apply $i_\ast$ to the above and note that $i_\ast \Filt{1}{\gamma}\subseteq \Filt{q+1}{\rm top}$. The proof of~\ref{thm:adams-top-fil} is thus complete.  
\end{proof}
\begin{remark}
The result~\ref{thm:adams-top-fil} can be stated and proved in a slightly different form. Namely, let $i\colon Y\hookrightarrow X$ be a regular closed immersion of noetherian schemes. Then for any $y\in K_0(Y)$ and any $n\geq 1$ we have
\begin{equation}\label{eq:jouanolou-2.1-psi}
\psi_n(i_\ast y)=i_\ast \psi_n(N, y)
\end{equation}
where $N\in K_0(Y)$ is the class of the co-normal sheaf of $Y\hookrightarrow X$ and $\psi_n(N, x)$ is defined as follows: let $A$, $N$ and $x$ be as in the discussion preceding~\ref{thm:adams-top-fil}. Define $\psi_n(N, x)$ by the generating function
\begin{equation*}\label{def:adams-N}
-t\frac{d\lambda (N, x)}{dt}/\lambda(x)=\sum_{n\geq 1} \psi_n(N, x)(-t)^n
\end{equation*}
where $\lambda (N, x)=\sum_{n\geq 0} \lambda^n(N, x)t^n$. Proofs and details are left to reader.
\end{remark}
Now we consider the operations $\gamma^i$. Let $x\in \Filt{q}{\rm top}$. It follows from the definition~\eqref{eq:gamma-series} of $\gamma^n$'s and~\ref{prop:fil-top-lambda-ideal} that $\gamma^n(x)\in \Filt{q}{\rm top}$ for all $n\geq 1$. Also note that if $x\in \Filt{q}{\rm top}$ with $q\geq 1$, then for each $n>q$ we have $\gamma^n(x)\in \Filt{q+1}{\rm top}$. The essential property is however the following.	
\begin{proposition}\label{prop:gamma-eigenvalue-top}
Let $S$ be a regular noetherian separated scheme. If $x\in \Filt{q}{\rm top}$ with $q\geq 1$, then
\[
\gamma^q(x)-(-1)^{q-1}(q-1)!x\in \Filt{q+1}{\rm top}
\]
\end{proposition}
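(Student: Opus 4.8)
The plan is to show that $\gamma^{q}$ descends to an additive operation on $\Grr{q}{\rm top}{S}$, reduce to the generators furnished by~\ref{lem:k0-cycle-length}, then copy the reduction to a regular immersion used in~\ref{thm:adams-top-fil}, and finally express $\gamma^{q}$ as an integral combination of the $\lambda^{i}$, whose action on the coniveau filtration is already under control.

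First I would check that $\gamma^{q}$ is additive modulo $\Filt{q+1}{\rm top}$ on $\Filt{q}{\rm top}$. We may assume $S$ connected. For $a,b\in\Filt{q}{\rm top}$ we have $\gamma^{q}(a+b)=\sum_{i+j=q}\gamma^{i}(a)\gamma^{j}(b)$; the terms with $i=0$ or $j=0$ contribute $\gamma^{q}(b)+\gamma^{q}(a)$, while for $i,j\geq 1$ one has (using $q\geq 1$, so $\Filt{q}{\rm top}\subseteq\Filt{1}{\rm top}=\Filt{1}{\gamma}$ by~\ref{pro:theta_X-iso}) that $\gamma^{i}(a)\in\Filt{i}{\gamma}$ by definition of the $\gamma$-filtration and $\gamma^{j}(b)\in\Filt{q}{\rm top}$ by~\ref{prop:fil-top-lambda-ideal}, hence $\gamma^{i}(a)\gamma^{j}(b)\in\Filt{i}{\gamma}\cdot\Filt{q}{\rm top}\subseteq\Filt{i+q}{\rm top}\subseteq\Filt{q+1}{\rm top}$ by~\ref{thm:gamma-top-filtration}. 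Since $\Filt{q+1}{\rm top}$ is a $\lambda$-ideal (again~\ref{prop:fil-top-lambda-ideal}), $\gamma^{q}$ maps it into itself, so $x\mapsto\gamma^{q}(x)$ induces a homomorphism of abelian groups $\Grr{q}{\rm top}{S}\to K_{0}'(S)/\Filt{q+1}{\rm top}$. By~\ref{lem:k0-cycle-length}, $\Grr{q}{\rm top}{S}$ is generated by the classes $\class{}{\cali{O}_{Z}}$ with $Z\hookrightarrow S$ integral closed of codimension $q$, so it is enough to prove the congruence for $x=\class{}{\cali{O}_{Z}}$; and, exactly as in the proof of~\ref{thm:adams-top-fil} (localize along an open subscheme containing the generic point of $Z$, then use~\ref{thm:G0-localization}, \ref{lem:j*-surjective}, and that the restriction map is a $\lambda$-morphism by~\ref{lem:K0-lambda-functor}), we may assume that $i\colon Z\hookrightarrow S$ is a regular closed immersion of codimension $q$ with $Z$ regular.

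Next I would use the identity $\gamma_{t}=\lambda_{t/(1-t)}$, which gives $\gamma^{q}(y)=\sum_{i=1}^{q}\binom{q-1}{i-1}\lambda^{i}(y)$, applied to $y=\class{}{\cali{O}_{Z}}=i_{\ast}(1_{Z})$. The proof of~\ref{thm:adams-top-fil} already establishes (via Jouanolou's formula $\lambda^{i}(i_{\ast}x)=i_{\ast}\lambda^{i}(N,x)$ of~\cite[2.1]{jouanolou-1970}, \ref{lem:SGA6-V-6.10} with $(N,d,q,n)=(\class{}{\cali{N}_{Z/S}},q,0,i)$, and $i_{\ast}\Filt{1}{\gamma}\subseteq\Filt{q+1}{\rm top}$) that
\[
\lambda^{i}\bigl(i_{\ast}(1_{Z})\bigr)+(-1)^{i}i^{q-1}\,i_{\ast}(1_{Z})\in\Filt{q+1}{\rm top}\qquad(i\geq 1).
\]
Substituting into the expression for $\gamma^{q}$ gives
\[
\gamma^{q}\bigl(\class{}{\cali{O}_{Z}}\bigr)\equiv\Bigl(\sum_{i=1}^{q}(-1)^{i-1}\binom{q-1}{i-1}i^{q-1}\Bigr)\class{}{\cali{O}_{Z}}\pmod{\Filt{q+1}{\rm top}}.
\]
It then remains to evaluate the integer in parentheses: with $j=i-1$ it equals $\sum_{j=0}^{q-1}(-1)^{j}\binom{q-1}{j}(j+1)^{q-1}$, which is $(-1)^{q-1}$ times the $(q-1)$-st finite difference at $0$ of the monic polynomial $j\mapsto(j+1)^{q-1}$ of degree $q-1$, hence equals $(-1)^{q-1}(q-1)!$. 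This proves the congruence for the generators, and the first step yields it in general.

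I expect the only genuinely non-formal input to be the passage to a regular immersion in the second step, but this is identical to the corresponding step in~\ref{thm:adams-top-fil}, so no new difficulty arises; the remaining ingredients are the partial multiplicativity~\ref{thm:gamma-top-filtration} (used to make $\gamma^{q}$ additive on the graded piece), the already-proven estimate on $\lambda^{i}(i_{\ast}x)$, and an elementary finite-difference identity.
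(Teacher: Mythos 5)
Your proof is correct and is essentially the paper's intended argument: the paper's proof of~\ref{prop:gamma-eigenvalue-top} is only the remark that it is ``similar to~\ref{thm:adams-top-fil}'', and your write-up carries out exactly that scheme (additivity of $\gamma^q$ modulo $\Filt{q+1}{\rm top}$, reduction via~\ref{lem:k0-cycle-length} to $\class{}{\cali{O}_Z}$, the same localization step to make $Z\hookrightarrow S$ a regular immersion). The one place where you make ``similar'' concrete --- expanding $\gamma^q=\sum_{i=1}^q\binom{q-1}{i-1}\lambda^i$ and reusing the estimate $\lambda^i(i_\ast x)+(-1)^i i^{q-1}i_\ast x\in\Filt{q+1}{\rm top}$ already proved in~\ref{thm:adams-top-fil}, followed by the finite-difference evaluation $\sum_{j=0}^{q-1}(-1)^j\binom{q-1}{j}(j+1)^{q-1}=(-1)^{q-1}(q-1)!$ --- is a clean and valid way to finish, arguably preferable to invoking a separate $\gamma$-analogue of~\ref{lem:SGA6-V-6.10}.
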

\begin{proof}
The proof is similar to~\ref{thm:adams-top-fil}. 
\end{proof}
\begin{cor}\label{cor:top-gamma-torsion}
Let $S$ be a regular noetherian separated scheme of finite dimension $d$. For any integer $q\geq 1$ we have
\[
(d-1)!(d-2)!\dotsc (q-1)!\Fil{q}{\rm top}{S}\subseteq \Fil{q}{\gamma}{S}.
\]
\end{cor}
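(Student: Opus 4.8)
The plan is to run a downward induction on $q$, with the ``eigenvalue'' statement of~\ref{prop:gamma-eigenvalue-top} as the only non-formal input, working throughout inside $K_0(S)$ via the identification $\uptheta_S$ of~\ref{pro:theta_X-iso} (so that $\Fil{q}{\gamma}{S}\subseteq \Fil{q}{\rm top}{S}$ by~\ref{gamma-subset-top}, and the assertion is exactly that $\Fil{q}{\rm top}{S}/\Fil{q}{\gamma}{S}$ is annihilated by $(d-1)!(d-2)!\dotsm(q-1)!$). For $q>d$ there is nothing to prove, since $\Fil{q}{\rm top}{S}=0$ by the definition of the coniveau filtration and $\Fil{q}{\gamma}{S}=0$ by~\ref{thm2:fd+1=0}; this serves as the base of the induction.

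For the inductive step I would assume the inclusion known with $q$ replaced by $q+1$ and take $x\in \Fil{q}{\rm top}{S}$, $q\geq 1$. First observe that $x\in \Fil{1}{\rm top}{S}=\Fil{1}{\gamma}{S}={\rm ker}(\epsilon)$ by~\ref{pro:theta_X-iso}, so that $\gamma^{q}(x)\in \Fil{q}{\gamma}{S}$ straight from the definition of the $\gamma$-filtration (a single factor $\gamma^{q}$ applied to an augmentation-zero element). Next, \ref{prop:gamma-eigenvalue-top} supplies $r\in \Fil{q+1}{\rm top}{S}$ with $(-1)^{q-1}(q-1)!\,x=\gamma^{q}(x)-r$. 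Multiplying through by the integer $c:=(d-1)!(d-2)!\dotsm q!$ gives $(-1)^{q-1}(q-1)!\,c\,x=c\,\gamma^{q}(x)-c\,r$, where $c\,\gamma^{q}(x)\in \Fil{q}{\gamma}{S}$ and $c\,r\in c\cdot\Fil{q+1}{\rm top}{S}\subseteq \Fil{q+1}{\gamma}{S}\subseteq \Fil{q}{\gamma}{S}$ by the induction hypothesis. Since $(q-1)!\,c=(d-1)!(d-2)!\dotsm(q-1)!$, this exhibits $(d-1)!(d-2)!\dotsm(q-1)!\,x$ as an element of $\Fil{q}{\gamma}{S}$, and as $x$ was arbitrary the step is complete.

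I do not anticipate a genuine obstacle: once~\ref{prop:gamma-eigenvalue-top} is granted the computation is essentially forced, and the only step that is more than bookkeeping is the claim $\gamma^{q}(x)\in \Fil{q}{\gamma}{S}$, which rests on the identification $\Fil{1}{\rm top}{S}={\rm ker}(\epsilon)$ from~\ref{pro:theta_X-iso} (equivalently, on $\uptheta_S$ being a $\lambda$-ring isomorphism). One should still keep an eye on the degenerate cases: for $q=1$ the conclusion already follows from $\Fil{1}{\rm top}{S}=\Fil{1}{\gamma}{S}$, while for $q=d$ the inductive step above is run against the vanishing $\Fil{d+1}{\rm top}{S}=0$ (with the usual convention that the empty product $c$ equals $1$).
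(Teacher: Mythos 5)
Your argument is correct and is essentially the paper's proof: both rest on applying~\ref{prop:gamma-eigenvalue-top} at successive levels $q, q+1,\dotsc$ together with the vanishing $\Fil{d+1}{\rm top}{S}=0$, the paper phrasing this as a forward iteration accumulating the factorials while you package the same iteration as a downward induction on $q$. Your explicit justification that $\gamma^q(x)\in\Fil{q}{\gamma}{S}$ via $\Fil{1}{\rm top}{S}=\ker(\epsilon)$ is a point the paper leaves implicit, but it is the same reasoning.
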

\begin{proof}
Let $x\in \Filt{q}{\rm top}$. By~\ref{prop:gamma-eigenvalue-top} we have $x_1:=(q-1)!x\in \Filt{q+1}{\rm top}+\Filt{q}{\gamma}$. Using~\ref{prop:gamma-eigenvalue-top} again (for $q+1$) we obtain $x_2:=q!x_1\in \Filt{q+2}{\rm top}+\Filt{q}{\gamma}$. Using this inductively and noting that $\Filt{d+1}{\rm top}=0$ we obtain the result. (Compare this with [SGA 6, Exp. VII, 4.11]). 
\end{proof}
It follows from~\ref{thm:gamma-top-filtration} that for each integer $q\geq 0$ there is defined a natural map
\[
\uprho\colon \Grr{q}{\gamma}{S}\rightarrow\Grr{q}{\rm top}{S}.
\]
\begin{cor}\label{thm:gamma-top-iso-Q}
Let $S$ be a regular noetherian separated scheme of finite dimension. The natural map
\[
\uprho\otimes\id{\Q}\colon \Grr{q}{\gamma}{S}\otimes\Q\rightarrow\Grr{q}{\rm top}{S}\otimes\Q
\]
is an isomorphism for any $q\geq 0$.
\end{cor}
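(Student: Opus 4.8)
The plan is to derive the statement directly from Corollary~\ref{cor:top-gamma-torsion} together with the exactness (flatness) of $-\otimes_\Z\Q$; no new geometric input is needed. Throughout we identify $K_0(S)$ with $K_0'(S)$ via $\uptheta_S$, so that $\Fil{q}{\gamma}{S}$ and $\Fil{q}{\rm top}{S}$ become subgroups of one abelian group $K_0(S)$, with $\Fil{q}{\gamma}{S}\subseteq\Fil{q}{\rm top}{S}$ by~\ref{gamma-subset-top} and $\Fil{q+1}{\gamma}{S}\subseteq\Fil{q}{\gamma}{S}\cap\Fil{q+1}{\rm top}{S}$; the map $\uprho$ in degree $q$ is the one induced by the inclusion $\Fil{q}{\gamma}{S}\hookrightarrow\Fil{q}{\rm top}{S}$. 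Write $d=\dime{S}$ and, for $1\leq j\leq d$, set $M_j=(d-1)!(d-2)!\cdots(j-1)!$, so that Corollary~\ref{cor:top-gamma-torsion} reads $M_j\,\Fil{j}{\rm top}{S}\subseteq\Fil{j}{\gamma}{S}$. For $j>d$ both $\Fil{j}{\rm top}{S}$ and $\Fil{j}{\gamma}{S}$ vanish (the former by definition, the latter by~\ref{thm2:fd+1=0}), and for $q=0$ the two filtration steps coincide by~\ref{pro:theta_X-iso}(2), so $\uprho$ is the identity; thus it suffices to treat $1\leq q\leq d$.

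First I would prove surjectivity. Given $y\in\Fil{q}{\rm top}{S}$, Corollary~\ref{cor:top-gamma-torsion} gives $M_q\,y\in\Fil{q}{\gamma}{S}$, so in $\Grr{q}{\rm top}{S}\otimes\Q$ the class of $y$ equals $M_q^{-1}$ times the image under $\uprho\otimes\id{\Q}$ of the class of $M_q\,y$. As such classes generate $\Grr{q}{\rm top}{S}\otimes\Q$, the map $\uprho\otimes\id{\Q}$ is onto.

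For injectivity I would use exactness of $-\otimes_\Z\Q$, giving $\ker(\uprho\otimes\id{\Q})=\ker(\uprho)\otimes\Q$, together with the identification $\ker(\uprho)=\bigl(\Fil{q}{\gamma}{S}\cap\Fil{q+1}{\rm top}{S}\bigr)/\Fil{q+1}{\gamma}{S}$. Any representative $x$ of an element of the numerator lies in $\Fil{q+1}{\rm top}{S}$, whence $M_{q+1}\,x\in\Fil{q+1}{\gamma}{S}$ by Corollary~\ref{cor:top-gamma-torsion} applied at level $q+1$ (when $q+1>d$ the group $\Fil{q+1}{\rm top}{S}$ is zero and there is nothing to check). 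Hence $\ker(\uprho)$ is a torsion group, so $\ker(\uprho)\otimes\Q=0$, and $\uprho\otimes\id{\Q}$ is injective; this completes the proof.

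I do not anticipate a real obstacle, since every ingredient is already available. The one point to state with care is the computation of $\ker(\uprho)$ as $\bigl(\Fil{q}{\gamma}{S}\cap\Fil{q+1}{\rm top}{S}\bigr)/\Fil{q+1}{\gamma}{S}$ and the observation that this group is annihilated by the positive integer $M_{q+1}$; the rest is the exactness of rationalization and elementary bookkeeping with the two filtrations. (Alternatively one could argue via the action of $\gamma^q$ on the graded pieces, as in the classical treatment and in the spirit of~\ref{prop:gamma-eigenvalue-top}; but the torsion bound in~\ref{cor:top-gamma-torsion} already suffices and is cleaner here.)
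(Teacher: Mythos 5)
Your argument is correct and is exactly the route the paper takes: its proof of this corollary is the single line ``this follows directly from~\ref{cor:top-gamma-torsion}'', and your write-up simply makes explicit the surjectivity (invert $M_q$) and injectivity ($\ker(\uprho)$ is killed by $M_{q+1}$, plus flatness of $\Q$) that this entails. The boundary cases $q=0$ and $q=d$ are handled correctly, so there is nothing to add.
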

\begin{proof}
This follows directly from~\ref{cor:top-gamma-torsion}.
\end{proof}
\begin{bibdiv}
\begin{biblist}
\bib{sga6}{book}{note = {\emph{Th\'eorie des intersections et th\'eor\`eme de
Riemann-Roch}. S\'eminaire de G\'eom\'etrie Alg\'ebrique du {Bois-Marie}
1966{-}1967 {(SGA} 6), Dirig\'e par P. Berthelot, A. Grothendieck et L. Illusie.
Avec la collaboration de D. Ferrand, J. P. Jouanolou, O. Jussila, S. Kleiman, M.
Raynaud et J. P. Serre. Lecture Notes in Mathematics, Vol. 225,
{Springer-Verlag}, Berlin, 1971}
}
\bib{EGA}{book}{
   author={Grothendieck, A.},
   title={\'El\'ements de g\'eom\'etrie alg\'ebrique (EGA I-IV)},
   publisher={Inst. Hautes \'Etudes Sci. Publ. Math.},
   note={no. 4,8,11,17,20,24,28,32 (1960-67)},
}
\bib{gillet-soule-1987}{article}{
title = {Intersection theory using Adams operations},
volume = {90},
number = {2},
journal = {Invent. Math.},
author = {Gillet, H.}, author={Soul\'e, C.},
year = {1987},
pages = {243--277},
}
\bib{jouanolou-1970}{article}{
   author={Jouanolou, J. P.},
   title={Riemann-Roch sans d\'enominateurs},
   journal={Invent. Math.},
   volume={11},
   date={1970},
   pages={15--26},
}
\bib{roberts-1987}{article}{
   author={Roberts, P.},
   title={The vanishing of intersection multiplicities of perfect complexes},
   journal={Bull. Amer. Math. Soc. (N.S.)},
   volume={13},
   date={1985},
   number={2},
   pages={127--130},
}
\end{biblist}
\end{bibdiv}
\end{document}